\documentclass[12pt]{amsart}
\usepackage{amscd,amssymb}
\usepackage{graphicx,enumerate}

\usepackage{color}
\usepackage[arrow,matrix,arrow,ps,color,line,curve,frame]{xy}
\usepackage{pgf, tikz}
\usepackage{url}
\usepackage{enumerate}

\makeatletter
\def\url@leostyle{%
  \@ifundefined{selectfont}{\def\UrlFont{\sf}}{\def\UrlFont{\small\ttfamily}}}
\makeatother
\urlstyle{leo}
\usetikzlibrary{matrix,arrows}

\usepackage[colorinlistoftodos,bordercolor=red,backgroundcolor=red!20,linecolor=red,textsize=scriptsize]{todonotes}

\textwidth=15cm \oddsidemargin=0.46cm \evensidemargin=0.46cm
\textheight=22cm

\marginparwidth=44pt
\let\oldlabel=\label

\def\prellabel{\marginparsep=1em
    \def\label##1{\oldlabel{##1}\ifmmode\else\ifinner\else
         \marginpar{{\footnotesize\ \\ \tt
                    ##1}}\fi\fi}}

\def\inte{\operatorname{relint}}
\def\conv{\operatorname{conv}}
\def\vertex{\operatorname{vert}}
\def\rank{\operatorname{rank}}
\def\Im{\operatorname{Im}}
\def\Hom{\operatorname{Hom}}

\def\Aff{\operatorname{Aff}}

\def\o{\operatorname{o}}
\def\ev{\operatorname{ev}}
\def\op{\operatorname{op}}
\def\vol{\operatorname{vol}}
\def\coker{\operatorname{coker}}

\def\codim{\operatorname{codim}}
\def\surj{\operatorname{surj}}
\def\inj{\operatorname{inj}}

\def\FCONV{\mathbf{FConv}}
\def\F{\mathbf{F}}


\def\cD{{\mathcal D}}
\def\cN{{\mathcal N}}

\def\cC{{\mathcal C}}


\def\RR{{\mathbb R}}

\def\FF{{\mathbb F}}


\def\SETS{\mathbf{Sets}}
\def\POL{\mathbf{Pol}}
\def\COMP{\mathbf{Comp}}
\def\CONV{\mathbf{Conv}}

\def\VECT{\mathbf{Vect}}


\let\phi=\varphi
\let\theta=\vartheta
\let\epsilon=\varepsilon

\textwidth=15cm \textheight=22cm \topmargin=0.5cm
\oddsidemargin=0.5cm \evensidemargin=0.5cm
\advance\headheight1.15pt

\newtheorem{lemma}{Lemma}[section]
\newtheorem{corollary}[lemma]{Corollary}
\newtheorem{theorem}[lemma]{Theorem}
\newtheorem{proposition}[lemma]{Proposition}

\theoremstyle{definition}
\newtheorem{definition}[lemma]{Definition}
\newtheorem{remark}[lemma]{Remark}
\newtheorem{example}[lemma]{Example}

\newtheorem{problem}[lemma]{Problem}

\begin{document}

\title[Affine-compact functors]{Affine-compact functors}

\begin{abstract}
Several well known polytopal constuctions are examined from the functorial point of view. A naive analogy between the Billera-Sturmfels fiber polytope and the abelian kernel is disproved by an infinite explicit series of polytopes. A correct functorial formula is provided in terms of an affine-compact substitute of the abelian kernel. The dual cokernel object is almost always the natural affine projection. The Mond-Smith-van Straten space of sandwiched simplices, useful in stochastic factorizations, leads to a different kind of affine-compact functors and new challenges in polytope theory.
\end{abstract}

\author{Joseph Gubeladze}

\address{Department of Mathematics\\
         San Francisco State University\\
         1600 Holloway Ave.\\
         San Francisco, CA 94132, USA}
\email{soso@sfsu.edu}

\thanks{Supported by U.S. NSF grant DMS 1301487 and Georgian NSF grant DI/16/5-103/12}

\subjclass[2010]{Primary 18B30, 52B11; Secondary 52A07, 52A25}

\keywords{polytope, fiber polytope, compact set, convex set, affine map, Minkowski sum, representable functor, affine-compact kernel, sandwiched simplices}

\maketitle

\section{Introduction}\label{Intro}

The role of representable functors in algebraic geometry and topology is well known. In this work, we initiate a functorial approach to various important convex and polytopal constructions, with similar emphasize on representable functors.  

On the one hand, the category $\POL$ of \emph{convex polytopes} and \emph{affine maps} is `too linear' for the two mentioned disciplines. But, on the other hand, the rich combinatorial structure it carries makes $\POL$ the backbone of geometric and, to a large extent, algebraic combinatorics. Methods and techniques from algebraic topology and geomerty are often used to solve important open problems in combinatorics. A natural question is whether $\POL$ itself can be subjected to a categorial/homological analysis. Put another way, one can ask whether (i) the representable functors of well-known polytopal objects have expected properties within the  category of functors defined on $\POL$, and (ii) natural  polytopal correspondences are representable functors, leading to new geometric objects. A first indication that these are meaningful questions is the initial observation that $\POL$ is a self-enriched symmetric monoidal category in a natural way; see Section \ref{preliminaries}. For any $P,Q\in\POL$, the facets of $\Hom(P,Q)$ and vertices of $P\otimes Q$ are readily described. However, determination of the vertices of $\Hom(P,Q)$ and facets of $P\otimes Q$ is a real challenge and partial progress in this direction is accomplished in \cite{Hompolytopes,Vertices}.

This work represents the next natural step beyond $\Hom$ and $\otimes$. Namely, 

\medskip\noindent$\centerdot$ In Section \ref{Kernel}, we examine the Billera-Sturmfels \emph{fiber polytope} $\Sigma f$ \cite{Fiber} from the functorial perspective. This polytope is the average over the fibers of a map $f$ in $\POL$ and defined in terms of the Minkowski integral. Fiber polytopes are generalizations of the Gel'fand-Kapranov-Zelevinsky \emph{secondary polytopes} \cite{Secondary2} and have many applications, especially in triangulation theory \cite[Ch.9]{Triangulations}. They are reminiscent of the kernels of linear maps, but only informally as the category $\POL$ is far from being abelian -- it even lacks a $0$ object. Still, one can ask whether for $R$ and $f:P\to Q$ in $\POL$ the polytopes $\Hom(R,\Sigma f)$ and $\Sigma\Hom(R,f)$ are isomorphic, mimicking the functorial isomorphism $\Hom(-,\ker\alpha)\cong\ker\Hom(-,\alpha)$ in the abelian setting. In Theorem \ref{main1} we provide an infinite series of polytopal counterexamples.

\medskip\noindent$\centerdot$ In Section \ref{Conker}, we develop an affine-compact version of the linear kernel for the more general category $\CONV$ of all \emph{convex compact sets} and affine maps. It leads to the correct version (Theorem \ref{kaf=fiber}) of the naive fiber equality, which was disproved in Section \ref{Kernel}. The \emph{affine-compact kernel} is preceded in Section \ref{Definitions} by a similar analysis of the Minkowski sum. Even if one wants to work exclusively with $\POL$, the limit sets enter the picture via the proof of the central Lemma \ref{minkowskisum}. This makes the passage from $\POL$ to $\CONV$ even more natural.

\medskip\noindent$\centerdot$ In Section \ref{Cokernel}, we show that the dual concept of the affine-compact cokernel is less geometrically meaningful: for a map $f:X\to Y$ in $\CONV$,  it is (almost always) the linear projection of $Y$ along the affine hull of $f(X)$.

\medskip\noindent$\centerdot$ Section \ref{Difference} represents a more radical departure from the linear setup. Motivated by the \emph{space of sandwiched simplices}, which was introduced by Mond-Smith-van Straten \cite{Sandwich} for modeling stochastic factorizations, we define a pair of functors: \emph{sandwiching} and \emph{complementing}. For a map $f:Y\to X$, the first functor makes $f(Y)$ a necessary target, like $0$ in the abelian situation, and the other makes the interior of $f(Y)$ an impossible target, a fusion of the topological quotient and affine maps. We observe that these functors are still \emph{affine-compact} but with values beyond $\CONV$. Unlike the sandwiches, the topological behavior of the complementing functor is transparent, and there is a complementarity between the functors (Theorem \ref{comp-quotient}). 

\medskip\noindent$\centerdot$ In Section \ref{Challenges}, we discuss new challenges in polytope theory the functorial approach leads to. 

\medskip\noindent\emph{Acknowledgment.} I thank (i) the referee for many constructive suggestions, which greatly improved the exposition, and for spotting a number of inaccuracies,  (ii) Tristram Bogart, from whom I learned about the question, attributed to someone else, whether $\Sigma\Hom(R,f)\cong\Hom(R,\Sigma f)$, and (iii) Timmy Chan for Figure 1.

\section{Categorial preliminaries}\label{preliminaries}

Traditionally, category theory is not necessarily in the toolkit of those working in convex geometry or polytope theory. In this section, we give a brief informal introduction to some basic terminology, necessary for us in this work. For a formal treatment we refer the reader to (i) the classics \cite{Categories} for the standard material on categories, and (ii) \cite{Enriched} for the enriched context.

\subsection{Polytopes and convex sets} Our references for basic facts on polytopes are \cite[Ch.1]{Kripo} and \cite{Polytopes}. 

A \emph{vector space} will always mean a finite-dimensional real vector space. Our polytopes are assumed to be \emph{convex}. An \emph{affine map} between two subsets of vector spaces are the maps, respecting barycentric coordinates. 

For a subset $X$ of a vector space, the \emph{convex} and \emph{affine hulls} will be denoted, correspondingly, by $\conv(X)$ and $\Aff(X)$. For a convex set $X$, by $\inte(X)$ we denote the \emph{relative interior} of $X$. The \emph{boundary} of $X$ is $\partial X=X\setminus\inte(X)$. For a polytope $P$, the set of its \emph{vertices}, that of \emph{facets}, and the \emph{normal fan} will be denoted by $\vertex(P)$, $\FF(P)$, and $\cN(P)$, respectively.

For an affine map $f:X\to Y$ between convex compact sets we put $\codim f=\dim X-\dim(f(X))$.

All further terminology and notation will be introduced in the text.

\subsection{Representable functors}
The categories we will be working with are: 
\begin{enumerate}[\rm(i)]
\item
$\SETS$ -- sets and maps,
\item $\VECT$ -- vector spaces and linear maps,
\item $\POL$ -- polytopes and affine maps,
\item $\CONV$ -- convex compact sets in vector spaces and affine maps,
\item $\COMP$ -- general compact subsets of vector spaces and affine maps,
\item Posets, viewed as categories (in Section \ref{Difference}).
\end{enumerate}

For a category $\cC$ and objects $a,b\in\cC$, we write $a\cong b$ if $a$ and $b$ are isomorphic. The set of morphisms $a\to b$ will be denoted by $\Hom_{\cC}(a,b)$, or just $\Hom(a,b)$ when there is no ambiguity. For $\cC=\SETS$, $\VECT$, $\POL$, or $\CONV$, the set $\Hom_{\cC}(a,b)$ is naturally an object of $\cC$. This is obvious when $\cC$ is $\SETS$ or $\VECT$; when $\cC=\POL$ this observation is the starting point of \cite{Hompolytopes}; the case $\cC=\CONV$ is shown as follows: when $a=\Delta$ is a simplex of dimension $d$ and $Y$ is an arbitrary convex set then $\Hom(\Delta,Y)\cong Y^{d+1}$ or, equivalently, any map from the vertices of $\Delta$ to $Y$ uniquely extends to an affine map $\Delta\to Y$, and for general $X\in\CONV$ we have
\begin{align*}
\Hom(X,Y)=\bigcap_{
{\tiny
\begin{matrix}
&\Delta\subset X\\
&\dim\Delta=\dim X\\
\end{matrix}
}}
\Hom(\Delta,Y),
\end{align*}
where the intersection is taken in the affine space of affine maps $\Aff(X)\to\Aff(Y)$.

For a category $\mathcal C$, the \emph{dual category} $\mathcal C^{\op}$ is the category with the same objects, where the direction of morphisms have been formally reversed. In $\mathcal C^{\op}$, the composition of two morphisms is the reversed copy of the composition of the corresponding morphisms in $\mathcal C$.

For two categories $\mathcal C$ and $\mathcal D$, a \emph{covariant functor} $F:\mathcal C\to\mathcal D$ is an object-to-object and morphism-to-morphism correspondence, respecting the identity morphisms and compositions. A \emph{contravariant functor} $G:\mathcal C\to\mathcal D$ is the same as a covariant functor $\mathcal C^{\op}\to\mathcal D$. Let $\mathcal C$ be $\SETS$, $\VECT$, $\POL$, or $\CONV$, and $a\in\mathcal C$ be an object. Then, for any morphism $f:b\to c$ in $\cC$, the maps
\begin{align*}
&\Hom(a,b)\to\Hom(a,c),\quad \phi\mapsto f\phi,\\
&\Hom(c,a)\to\Hom(b,a),\quad \psi\to \psi f,
\end{align*}
are morphisms in $\cC$. The resulting functors
\begin{align*}
\Hom(a,-):\mathcal C\to\mathcal C\quad\text{and}\quad\Hom(-,a):\mathcal C^{\op}\to\mathcal C
\end{align*}
are called \emph{hom-functors}.

A functor is \emph{affine-compact} if it is defined on $\POL$ or $\CONV$, evaluates in $\COMP$, and induces affine maps between the hom-sets. 

Examples of affine-compact functors are given by the hom-functors, defined on $\POL$ or $\CONV$.

A (covariant) functor $F:\cC\to\cD$ is called \emph{full} if the induced maps $\Hom_{\cC}(a,b)\to\Hom_{\cD}(F(a),F(b))$ are surjective for all $a,b\in\cC$. If the mentioned maps are injective, then $F$ is called \emph{faithful}. 

\subsection{Limits and colimits} Let $F:\cC\to\cD$ be a covariant functor. Then we have the category of \emph{co-cones over} $F$: its objects are families of morphisms of the form $\big(g_a:F(a)\to x\ |\ a\in\cC\big)$, where $x\in\cD$, for which the following triangles commute for all morphisms $f:a\to b$:
$$
\xymatrix{
&x&\\
F(a)\ar[rr]_{F(f)}\ar[ur]^{g_a}&&F(b)\ar[ul]_{g_b}\\
}
$$
A moprhism from a co-cone $\big(g_a:F(a)\to x\ |\ a\in\cC\big)$ to a co-cone $\big(h_a:F(a)\to y\ |\ a\in\cC\big)$ is a morphism $\alpha:x\to y$ in $\cD$, making the following triangles commutative for all $a\in\cC$.
$$
\xymatrix{
x\ar[rr]^\alpha&&y\\
&F(a)\ar[ul]^{g_a}\ar[ur]_{h_a}&\\
}
$$

The \emph{colimit} of $F$, denoted by $\lim\limits_{\longrightarrow}F$, is any  \emph{terminal} object of the category of co-cones, i.e., a co-cone which admits exactly one morphism from any co-cone. In particular, $\lim\limits_{\longrightarrow}F$ is defined up to isomorphism. By abusing terminology, the apex of a colimit co-cone will be also referred to as the colimit of $F$.  

Colimits of appropriate functors include:  the usual limits of monotonic bounded sequences in $\RR$, disjoint unions of sets, disjoint unions of topological spaces, quotient topological spaces, direct sums of modules, quotient modules, tensor products of commutative algebras etc. Any functor from a \emph{finite} category to $\POL$ or $\CONV$ has a colimit, i.e., $\POL$ and $\CONV$ are \emph{finitely co-complete}.

The dual notion is the \emph{limit} of a functor $F:\cC\to\cD$, which are defined in terms of the \emph{category of cones} $\big(g_a:x\to F(a)\ |\ a\in\cC\big)$, where $x\in\cD$. The \emph{limit} $\lim\limits_{\longleftarrow}F$ is then an \emph{initial} object of this category, i.e., a cone from which there is exactly one morphism to any cone. Again, the limit is defined up to isomorphism and, by abusing the terminology, the apex of a limit cone will be also called the limit of $F$.

Limits of appropriate functors include: direct products of sets or algebraic structures (as groups, modules, rings), the kernels of group, ring, or module homomorphisms.  Any functor from a \emph{finite} category to $\POL$ or $\CONV$ has a limit, i.e., $\POL$ and $\CONV$ are \emph{finitely complete}. 

Finite inverse limits in $\POL$ (resp. $\CONV$) and $\SETS$ agree in the following sense: for a finite category $\cC$ and a functor $F:\cC\to\POL$, we have the equality of sets $\iota\big(\lim\limits_{\longleftarrow} F\big)=\lim\limits_{\longleftarrow}(\iota\circ F)$, where $\iota:\POL\to\SETS$ is the identity embedding, and the same is true for $\CONV$.

A \emph{pull-back diagram} in $\SETS$, $\VECT$, $\POL$, or $\CONV$, is the limit of a functor to the corresponding category from the following category with three objects and two non-identity arrows:
$$
\xymatrix{
&\bullet\ar[d]\\
\bullet\ar[r]&\bullet
}
$$
More explicitly, a pull-back diagram in $\SETS$, $\VECT$, $\POL$, or $\CONV$, is a commutative diagram of the form:
$$
\xymatrix{
W\ar[r]\ar[d]&X\ar[d]^f\\
Y\ar[r]_g&Z
}
$$
where $W=\{(x,y)\ |\ f(x)=g(y)\}\subset X\times Y$ and the maps from $W$ are the projection maps. If $g$ is injective then $W$ is naturally identified with $f^{-1}(g(Y))$.

\subsection{Yoneda embedding}\label{Yoneda_embedding} For two categories and two covariant functors $F,G:\cC\to\cD$, a \emph{natural transformation} $\tau:F\xymatrix{\ar[r]^{\bullet}&}G$ is a system of morphisms $\tau=\{\tau_a:F(a)\to F(a)\}$, where $a$ runs over the objects of $\cC$, making the following diagrams commutative for all choices of  $f\in\Hom(a,b)$:
$$
\xymatrix{
F(a)\ar[r]^{\tau_a}\ar[d]_{F(f)}&G(b)\ar[d]^{G(f)}\\
F(b)\ar[r]_{\tau_b}\ar[r]&G(b)\\
}
$$

The covariant functors $\cC\to\cD$ and their natural transformations form the \emph{category of functors} $\cD^{\cC}$. In particular, $\cD^{\cC^{\op}}$ is the category of contravariant functors $\cC\to\cD$.

Our goal in this work is to develop polytopal analogues of the following isomorphims of functors:
\begin{align*} 
&\Hom_\VECT(-,\ker f)\cong\ker\Hom_\VECT(-,f),\\
&\Hom_\VECT(\coker f,-)\cong\ker\Hom_\VECT(f,-),\\
&\Hom_\VECT(-,\coker f)\cong\coker\Hom_\VECT(-,f),\\
\end{align*}
where the first two are standard facts on (co)limits of functors, and the third is an `abelian' phenomenon, meaning that $\VECT$ is an \emph{abelian category}.

When $\cC$ is $\SETS$, $\VECT$, $\POL$, or $\CONV$, a functor in $\cC^{\cC}$, (respectively, $\cC^{\cC^{\op}}$) is called \emph{representable} if it is isomorphic to $\Hom(a,-)$ (respectively, $\Hom(-,a)$) for some $a\in\cC$. Representable functors give a handle on general functors:

\medskip\noindent{\bf\emph{Yoneda Embedding.}} \emph{For $\cC=\SETS$, $\VECT$, $\POL$, or $\CONV$, the assignments 
\begin{align*}
&\cC^{\op}\to\cC^{\cC},\qquad a\mapsto\Hom(a,-),\\
&\cC\to\cC^{\cC^{\op}},\qquad a\mapsto\Hom(-,a),
\end{align*}
define covariant full and faithful embeddings; moreover, every functor in the target category is a colimit of representable functors.}

\medskip The standard Yoneda embedding concerns the case $\cC=\SETS$ or, more generally, when the functors evaluate in $\SETS$. It is a consequence of the \emph{Yoneda Lemma} \cite[Ch.2]{Categories}. The colimit representation claim for the other three categories follows from the same standard recipe for colimit representations, as given in \cite[Ch.3, \S7]{Categories}, where the details are worked out only for covariant functors. (Beware of the typos in the proof of \cite[Theorem III.7.1]{Categories}: $J^D$ in the definition of $M$ as well as $J$ in the diagram (1) are supposed to be $J^{\op}$.)

Our strategy for exploring similarities between $\POL$ and $\VECT$ is based on this embedding: $\POL^\POL$ is more amenable to such an analysis than the more rigid $\POL$, and general functors are in the `vicinity' of representable functors.

\subsection{Self-enriched categories} A more conceptual paradigm of the Yoneda Embedding for the categories of our interest is the \emph{enriched} context over \emph{symmetric monoidal categories}. The Yoneda Lemma in this generality is worked out in \cite[Ch.2]{Enriched}:
the four categories, mentioned in the Yoneda Embedding above, are symmetric monoidal categories in a natural way. This means that there is a \emph{bifunctor} $\otimes:\cC\times\cC\to\cC$, together with a distinguished object $I$ and natural isomorphisms $a\otimes I\cong a\cong I\otimes a$, $(a\otimes b)\otimes c\cong a\otimes(b\otimes c)$, and $a\otimes b\cong b\otimes a$ for any objects $a,b,c\in\cC$, satisfying certain coherence conditions. The monoidal product in $\SETS$ is the Cartesian product, with $I$ a singleton; in the case of $\VECT$, the object $I$ is the space $\RR$ and $\otimes$ is the tensor product of vector spaces; for $\POL$ the object $I$ is a singleton and the tensor product of polytopes is the \emph{dehomogenization} of the usual tensor product of the associated \emph{homogenization cones}; see \cite[Section 3]{Hompolytopes}. A particular realization is
$$
P\otimes Q=\conv\{(v\otimes w,v,w)\ |\ v\in\vertex(P)\ w\in\vertex(Q)\}\subset\big(V\otimes W\big)\oplus V\oplus W,
$$ 
where $P\subset V$ and $Q\subset W$ are the ambient vector spaces. Moreover, these monoidal categories are \emph{closed}, i.e., we have functorial isomorphisms
$$
\Hom(a\otimes b,c)\cong\Hom(a,\Hom(b,c))
$$
for all $a,b,c$. In other words, $\otimes$ and $\Hom$ form a pair of \emph{left and right adjoint functors}.

The fact that in $\SETS$, $\VECT$, or $\POL$, the sets $\Hom(a,b)$ are objects of the same category and the composition defines morphisms $\Hom(a,b)\otimes\Hom(b,c)\to\Hom(a,c)$ with natural coherence properties means that $\VECT$ and $\POL$ are \emph{self-enriched} symmetric monoidal categories.

Other classical examples of self-enriched symmetric monoidal categories include the following categories of modules over a commutative ring: general modules, finitely generated modules, torsion modules, free modules, projective modules. 

The concept naturally extends to \emph{categories, enriched over a symmetric monoidal category}. In particular, a category enriched over $\SETS$ is just the original definition of a category.  

Without delving into technical details, we observe that $\CONV$ is also a self-enriched symmetric monoidal category with respect to the tensor product
\begin{align*} 
X\otimes_{\CONV}Y:=&\bigcup(P\otimes_{\POL}Q\ |\ P\subset X\ \text{and}\ Q\subset Y\ \text{polytopes})=\\
&\conv\big(v\otimes w,v,w\ |\ v\in\partial X,\ w\in\partial Y\big)\subset\big(V\otimes W\big)\oplus V\oplus W.
\end{align*}

The monoidal structure of $\CONV$ extends that of $\POL$. There is another self-enriched symmetric monoidal extension of $\POL$, different from $\CONV$, whose objects are \emph{polytopal complexes} \cite{Hom-complexes}. The functors $X\otimes-$, where $X$ is an object in $\POL$ or $\CONV$, are further examples of affine-compact functors.

More background material on $\Hom$ and $\otimes$ in the category of general convex cones is found in \cite{Velasco}, which focuses on multilinear optimization. The undergraduate thesis \cite{Valby} makes a lucid reading on categorial generalities on polytopes and cones. 

\section{Minkowski sums, fibers, continuity}\label{Definitions}

Let $V\in\VECT$, $\lambda_1,\ldots,\lambda_n\in\RR_{\ge0}$, and $X_1,\ldots,X_n\subset V$ be subsets.  The corresponding \emph{Minkowski linear combination} is the subset
$$
\sum_{i=1}^n\lambda_iX_i=\big\{\sum_{i=1}^n\lambda_ix_i\ |\ x_i\in X_i,\ i=1,\ldots,n\big\}\subset V.
$$
It belongs to $\CONV$ if $X_i\in\CONV$, or to $\POL$ if $X_i\in\POL$. 

\begin{lemma}\label{basic-sum}
Let $P$ and $Q$ be polytopes in a vector space $V$.
\begin{enumerate}[\rm(a)]
\item $\cN(P+Q)$ is the common refinement of $\cN(P)$ and $\cN(Q)$. 
\item $\#\vertex(P)\le\#\vertex(P+Q)$ and $\#\FF(P)\le\#\FF(P+Q)$.
\item If $\cN(P)=\cN(Q)$, then the faces of $P+Q$ are the Minkowski sums of the pairs of corresponding faces of $P$ and $Q$.
\end{enumerate}
\end{lemma}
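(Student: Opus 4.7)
\medskip

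The whole statement is driven by part (a), so my plan is to prove (a) first and then extract (b) and (c) as essentially formal consequences.

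For part (a), the standard handle on the normal fan is: for a nonzero linear functional $\varphi \in V^*$, the face of $P$ where $\varphi$ attains its maximum, call it $P^\varphi$, is a well-defined face of $P$, and the cone $\{\varphi \mid P^\varphi = F\}$ in $V^*$ is the (relative interior of the) normal cone of the face $F$. The key identity I would establish, directly from the definition of the Minkowski sum, is
\[
(P+Q)^\varphi = P^\varphi + Q^\varphi
\]
for every $\varphi \in V^*$: any point of $P+Q$ where $\varphi$ is maximized is forced to split into a maximizer in $P$ and one in $Q$, and conversely any such sum lies in $(P+Q)^\varphi$. From this it is immediate that two functionals $\varphi_1, \varphi_2$ pick out the same face of $P+Q$ iff they pick out the same face of both $P$ and $Q$, so the normal cones of $P+Q$ are exactly the nonempty intersections of a normal cone of $P$ with a normal cone of $Q$. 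That is the definition of the common refinement.

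For part (b), I would simply read off both inequalities from (a). Vertices of a polytope correspond bijectively to the maximal cones of its normal fan, and facets correspond to the rays. The common refinement of two complete fans has at least as many maximal cones as either fan (each maximal cone of $\cN(P)$ is tiled by maximal cones of the refinement), giving $\#\vertex(P) \le \#\vertex(P+Q)$. For facets, note that a ray of $\cN(P)$ is 1-dimensional and therefore already appears, unsubdivided, as a ray of the common refinement, so $\#\FF(P) \le \#\FF(P+Q)$.

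Part (c) is the easiest once (a) is in hand: if $\cN(P) = \cN(Q)$ then their common refinement is just $\cN(P)$ itself, so $\cN(P+Q) = \cN(P) = \cN(Q)$ and there is a canonical face-to-face correspondence between $P$, $Q$, $P+Q$. The identity $(P+Q)^\varphi = P^\varphi + Q^\varphi$ from the proof of (a) now literally says that the face of $P+Q$ picked out by any $\varphi$ is the Minkowski sum of the corresponding faces of $P$ and $Q$. The main (minor) obstacle is being careful about the case $\varphi = 0$, which picks out the whole polytope, but this is handled by the convention that the improper face $P$ itself has normal cone $\{0\}$; everything is then uniform.
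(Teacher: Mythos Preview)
Your argument is correct and follows exactly the route the paper takes: the paper cites \cite[Proposition 7.12]{Polytopes} for (a) and declares (b) and (c) ``easy consequences,'' while you simply unpack that citation via the standard identity $(P+Q)^\varphi = P^\varphi + Q^\varphi$ and then read off (b) and (c) in the same spirit. There is nothing to add.
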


The part (a) is proved in \cite[Proposition 7.12]{Polytopes}, and (b,c) are easy consequences.

In the following definition, we assume that (i) $X\subset V$ and $Y\subset W$ are compact convex subsets of vector spaces, and (ii) in $X$ and $\Aff(f(X))$ we have chosen translation invariant Borel measures.

\begin{definition}\label{fiber}
Let $f:X\to Y$ be a map in $\CONV$. The set of \emph{sections} of $f$ is defined by
$\Gamma_f=\{\gamma:f(X)\to X\ |\ \gamma\ \text{is Borel measurable and}\ f\circ\gamma={\bf 1}_{f(X)}\}$.
The \emph{fiber of $f$} is defined by
$$
\Sigma f=\frac1{\vol(f(X))}\left\{\int_{f(X)}\gamma(y)dy\ |\ \gamma\in\Gamma_f\right\}\subset V.
$$
\end{definition}

\begin{remark}\label{why-nonsurjective}
(a)  Informally, $\Sigma f$ is the `average fiber' of $f$ over $f(X)$. It is easily observed that $\Sigma f$ is in $\CONV$. If we vary the Borel measures in $\Aff(X)$ and $\Aff(Y)$, the resulting fibers will be mutually isomorphic objects in $\CONV$. Since we are only interested in properties up to isomorphism, whenever we talk on fibers of morphisms in $\CONV$, it is always implicitly assumed that the relevant affine spaces are equipped with arbitrarily chosen translation invariant Borel measures.

\medskip\noindent(b) Definition \ref{fiber} is the straightforward extension of the Billera-Sturmfels \emph{fiber polytope} \cite{Fiber} to the class of compact convex sets. Unlike \cite{Fiber}, we allow non-surjective maps $f$ because, even if $f:X\to Y$ is surjective, the induced map $\Hom(Z,f)$, important in our analysis of $\Sigma$, may fail to be surjective; e.g., for a surjective affine map from a tetrahedron $X$ to a quadrangle $Y=Z$, the identity map $\mathbf 1_Z$ does not lift to $\Hom(Z,X)$.
\end{remark}

\begin{proposition}\label{barycenter}
Let $f:X\to Y$ be in $\CONV$.
\begin{enumerate}[\rm(a)]
\item If $X$ and $Y$ are in $\POL$, then $\Sigma f$ is the following $(\codim f)$-dimensional polytope
\begin{align*}
\Sigma f=\sum_{i=1}^n\frac{\vol(\sigma_i)}{\vol(f(X))}\cdot f^{-1}(x_i),
\end{align*}
where $\{\sigma_1,\ldots,\sigma_n\}$ are the maximal cells of any subdivision of $f(X)$, subdividing the $f$-images of the faces of the polytope $X$, and $x_i\in\sigma_i$ are the barycenters.
\item $\Sigma f\in\CONV$ and $\dim(\Sigma f)=\codim f$.
\end{enumerate}
\end{proposition}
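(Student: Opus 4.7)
The plan is to split the statement into (a) the polytopal case, which is essentially the Billera--Sturmfels theorem, and (b) the extension to general convex compact sets, which I would reduce to (a) by polyhedral approximation together with standard measure-theoretic compactness.

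For part (a), observe first that both sides of the claimed equality depend only on $f(X)$, not on $Y$. So we may replace $Y$ by the subpolytope $f(X)$ and assume $f$ is surjective, in which case the statement is \cite[Thm.~1.5]{Fiber}. A self-contained argument proceeds in three steps. (i) By the coherence hypothesis on the subdivision $\{\sigma_i\}$, over $\inte(\sigma_i)$ the fibers $f^{-1}(y)$ have constant combinatorial type, and their vertices $v_1^{(i)}(y),\ldots,v_{k_i}^{(i)}(y)\in X$ depend affinely on $y\in\sigma_i$. (ii) Prove the cellwise identity
\[
\left\{\int_{\sigma_i}\gamma(y)\,dy \,\Big|\, \gamma\in\Gamma_f\right\}=\vol(\sigma_i)\cdot f^{-1}(x_i).
\]
(iii) Sum over $i$ using the Minkowski additivity of the set-valued Aumann integral to obtain the finite Minkowski sum.

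The main technical obstacle is the cellwise identity in (ii). The inclusion $\supseteq$ is a direct construction: given $z=\sum_j\lambda_j v_j^{(i)}(x_i)\in f^{-1}(x_i)$, the section $\gamma(y):=\sum_j\lambda_j v_j^{(i)}(y)$ integrates to $\vol(\sigma_i)\cdot z$, because the $v_j^{(i)}$ are affine and $x_i$ is the barycenter of $\sigma_i$. For $\subseteq$, an arbitrary section written in barycentric coordinates $\gamma(y)=\sum_j\lambda_j(y)v_j^{(i)}(y)$ has measurable, generally non-constant, weights; a Lyapunov-type convexity argument (the set of integrals of measurable selections of an integrably bounded set-valued map is convex and determined by constant selections in the present affine setting) shows that the set of achievable integrals is unchanged if one restricts to constant $\lambda_j$, which yields a point of $\vol(\sigma_i)\,f^{-1}(x_i)$.

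For part (b), convexity of $\Sigma f$ follows since $\Gamma_f$ is closed under pointwise convex combinations (as $X$ is convex) and integration is linear, while boundedness follows from $\Sigma f\subset X$. For closedness, given a sequence $\gamma_n\in\Gamma_f$ with $\int\gamma_n\,dy$ convergent, I would extract a weak-$*$ limit $\gamma\in L^\infty(f(X),V)$ via Banach--Alaoglu; the pointwise-a.e.\ constraints $\gamma(y)\in X$ and $f\circ\gamma=\mathbf{1}_{f(X)}$ survive weak convergence (both are closed convex conditions), so $\gamma$ is again a section and realizes the limit integral. For the dimension equality, the upper bound $\dim\Sigma f\le\codim f$ is immediate, since the difference of any two sections takes values in $\ker f_{\text{lin}}$ and hence $\Sigma f$ lies in a single translate of this $(\codim f)$-dimensional subspace. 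The matching lower bound I would obtain either directly, by perturbing a fixed section on a small neighborhood of an interior point $y_0\in\inte(f(X))$ to produce $\codim f$ independent perturbations of the integral, or by approximating $X$ from inside by inscribed polytopes $X_n\uparrow X$, applying (a) to each $f|_{X_n}$ and passing to the Hausdorff limit.
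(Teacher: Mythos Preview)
Your treatment of (a) matches the paper's: both reduce to the surjective case and invoke \cite[Thm.~1.5]{Fiber}, with your three-step sketch unpacking that citation. One simplification: the $\subseteq$ inclusion in step~(ii) does not need any Lyapunov-type argument. For any section $\gamma$ over $\sigma_i$, the normalized integral $\tfrac{1}{\vol(\sigma_i)}\int_{\sigma_i}\gamma$ is an affine average of points of $X$, hence lies in $X$, and applying $f$ returns the barycenter $x_i$; thus the average automatically lies in $f^{-1}(x_i)$. The coherence hypothesis on the subdivision is only used for the $\supseteq$ inclusion, where your constant-barycentric-coordinate sections require the vertex functions $v_j^{(i)}$ to be globally defined and affine over $\sigma_i$.

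For (b) your route genuinely differs from the paper's. The paper approximates $X$ from \emph{outside} by nested polytopes $X_n\supset X$ and exhibits $\Sigma f$ as the intersection $\bigcap_n\Sigma f_n$, obtaining compactness and convexity geometrically without functional analysis. Your Banach--Alaoglu argument is an equally valid alternative: the constraint $\gamma(y)\in f^{-1}(y)$ is a measurable family of closed convex conditions, and such conditions pass to weak-$*$ limits in $L^\infty$. For the dimension lower bound, your direct perturbation (modify a fixed section on a neighborhood of an interior point of $f(X)$ in $\codim f$ independent directions within the kernel) is in fact cleaner than either approximation scheme, and the paper is terser than you here. Your backup via \emph{inner} approximation $X_n\uparrow X$ and Hausdorff limits is the one place to be cautious: Hausdorff convergence can drop dimension, and there is no obvious inclusion $\Sigma(f|_{X_n})\subset\Sigma f$ (both the integration domain and the normalization change with $n$), so this alternative would need an additional non-degeneracy argument. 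Since your perturbation argument already does the job, I would simply drop the approximation alternative.
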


\begin{proof}
(a) This is a slightly extended reformulation of \cite[Theorem 1.5]{Fiber}, which states the equality for the coarsest such subdivision. But exactly the same argument applies to any subdivision. Alternatively, the general case reduces to the case when the coarsest subdivision is the trivial subdivision of $f(X)$, and then the equality follows from the fact that $y\mapsto f^{-1}(y)$ respects affine combinations.

Since $\dim(f^{-1}(x_i))=\codim f$ for all $i$, we have $\dim(\Sigma f)=\codim f$.

\medskip\noindent(b) The argument in the polytopal case \cite[Proposition 1.1]{Fiber}, based on Aumann's 1965 results on integrals of set-valued functions, works here too. Alternatively, the general case can be deduced from the the polytopal case by approximating $X$ from outside by a nested set of polytopes in $\Aff(X)$, containing $X$, and approximating $f(X)$ from outside by the images of these polytopes under the affine extension $\tilde f:\Aff(X)\to\Aff(Y)$. In this case $\Sigma f$ is the intersection of the resulting fiber polytopes.

The dimension equality is a consequence of of the polytopal case in (a). 
\end{proof}

Assume $X$ is a subset of a vector space $V$, in which we have fixed a norm. For a real number $\delta>0$, denote the \emph{$\delta$-neighborhood of $X$} by $U_\delta(X)$; i.e., $U_\delta(X)$ is the union of open $\delta$-discs, centered at the elements of $X$. For two sets $X,Y\subset V$, the \emph{Hausdorff distance} between them is defined by
\begin{align*}
d_{\text H}(X,Y)=\inf\{\delta\ge0\ | X\subset U_\delta(Y)\ \text{and}\ Y\subset U_\delta(X)\}
\end{align*}

The following definition is independent of the choice of a norm in $V$:
\begin{definition}\label{limit}
Let $\epsilon>0$ and $V\in\VECT$. Assume $X_t,X\in\CONV$ and $X_t,X\subset V$ for $0<t<\epsilon$. We write $\lim\limits_{t\to0}X_t=X$ if $\lim\limits_{t\to0}d_{\text H}(X_t,X)=0$.
\end{definition}

The limit set, if it exists, is unique. The uniqueness would fail if we allowed non-closed convex sets. We remark that, despite similar terminology, it is impossible to confuse the two different limits -- functorial and with respect to Hausdorff metric. 

\begin{lemma}\label{continuity}
Let $V,W\in\VECT$, $X_t,Y_t,X,Y\in\CONV$, $X_t,X\subset V$, and $Y_t,Y\subset W$ for $0<t<\epsilon$. Assume $\lim\limits_{t\to0}X_t=X$ and $\lim\limits_{t\to0}Y_t=Y$. We have:
\begin{enumerate}[\rm(a)]
\item $\lim\limits_{t\to0}(X_t+Y_t)=X+Y$, assuming $V=W$;
\item $\lim\limits_{t\to0}\Hom(X_t,Y_t)=\Hom(X,Y)$ in $\Hom(\Aff(X),W)$, assuming $\Aff(X)=\Aff(X_t)$ for all $t$;
\item  If $f:X\to Y$ is an affine map, then $\lim\limits_{t\to0}\Sigma f_t=\Sigma f$, assuming $\Aff(X)=\Aff(X_t)$ for all $t$, where $f_t:X_t\to f_t(X_t)$ is obtained from $f$ by first extending to $\Aff(X)$ and then restricting to $X_t$.
\end{enumerate}
\end{lemma}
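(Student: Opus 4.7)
My plan splits the three claims in order of increasing difficulty.

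For part (a), I would run the standard subadditivity calculation: given $X_t \subset U_\alpha(X)$, $X \subset U_\alpha(X_t)$, $Y_t \subset U_\beta(Y)$, and $Y \subset U_\beta(Y_t)$, a direct inspection yields $X_t + Y_t \subset U_{\alpha + \beta}(X + Y)$ and symmetrically, so $d_{\text H}(X_t + Y_t, X + Y) \le d_{\text H}(X_t, X) + d_{\text H}(Y_t, Y) \to 0$.

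For part (b), I would place all the hom-sets inside the common finite-dimensional affine space $\mathcal{A} := \Hom(\Aff(X), W)$ and verify Hausdorff convergence via upper and lower semicontinuity. The upper direction is a routine compactness argument: if $f_{t_n} \in \Hom(X_{t_n}, Y_{t_n})$ converges to $f^*$ in $\mathcal{A}$, then for each $x \in X$ I would pick $x_{t_n} \in X_{t_n}$ with $x_{t_n} \to x$, so that $f_{t_n}(x_{t_n}) \in Y_{t_n}$ converges to $f^*(x)$, which must lie in $Y$, forcing $f^* \in \Hom(X, Y)$. The lower direction is the crux: given $f \in \Hom(X, Y)$, I would first contract $f$ toward a chosen point $y_0 \in \inte(Y)$ via $f_\epsilon := (1-\epsilon) f + \epsilon y_0$, whose image sits at positive distance from $\partial Y$ inside $\Aff(Y)$, and then compose with a projection onto the nearby affine hull $\Aff(Y_t)$ to push $f_\epsilon(X_t)$ into $Y_t$ for small $t$. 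A diagonal choice $\epsilon(t) \to 0$ then yields $f_t \in \Hom(X_t, Y_t)$ with $f_t \to f$.

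For part (c), I would work with support functions. Swapping the sup over sections with integration in Definition \ref{fiber} gives
\[
h_{\Sigma f}(\varphi) = \frac{1}{\vol(f(X))} \int_{f(X)} h_{f^{-1}(y)}(\varphi)\, dy
\]
for every linear functional $\varphi$ on $V$ (using measurable selection to realize the pointwise sup), and likewise for $f_t$ after extending $f$ to $\Aff(X)$. Pointwise convergence of support functions of a uniformly bounded family of convex compact sets implies Hausdorff convergence, so the task reduces to three ingredients: $\vol(f(X_t)) \to \vol(f(X))$ from $f(X_t) \to f(X)$ in Hausdorff (since $f$ is Lipschitz and $X_t \to X$); $h_{f_t^{-1}(y)}(\varphi) \to h_{f^{-1}(y)}(\varphi)$ pointwise for $y \in \inte(f(X))$, obtained from $f_t^{-1}(y) = X_t \cap f^{-1}(y) \to f^{-1}(y)$; and dominated convergence applied to the integrand extended by $0$ outside the integration domain.

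The main obstacle is the lower semicontinuity in part (b). The asymmetric hypothesis---$\Aff(X_t) = \Aff(X)$ but $\Aff(Y_t)$ free to drift---means that merely restricting $f$ to $X_t$ need not land inside $Y_t$ as soon as $f(X)$ touches $\partial Y$, so one genuinely needs to interpolate between contracting inward toward $\inte(Y)$ and projecting onto the shifting $\Aff(Y_t)$.
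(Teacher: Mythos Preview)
Your proposal is correct and covers all three parts; the level of detail actually exceeds what the paper supplies.

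For (a) your argument is identical to the paper's: both verify the mutual $\delta$-neighborhood inclusions directly and conclude subadditivity of $d_{\text H}$ under Minkowski sum.

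For (b) the two routes diverge in mechanism though not in spirit. The paper fixes an affinely independent set $\{x_0,\dots,x_d\}$ lying in $X\cap\bigcap_t X_t$ (after a WLOG reduction) and argues that the images $f(x_i)$ can be infinitesimally perturbed so that the resulting affine map lands in $Y_t$; this exploits the identification $\Hom(\Aff(X),W)\cong W^{d+1}$ via the chosen basis. You instead run a contraction toward an interior point $y_0\in\inte(Y)$ followed by a projection onto $\Aff(Y_t)$, then diagonalize. Both isolate the same hard direction (lower semicontinuity when $f(X)$ touches $\partial Y$) and both leave comparable details to the reader---your projection step needs the observation that $\dim Y_t\ge\dim Y$ eventually and that the projection moves bounded sets by $o(1)$, while the paper's ``WLOG there is a common affine basis'' needs its own justification. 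Your compactness argument for the upper direction is more explicit than the paper's one-line dismissal (``more straightforward'').

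For (c) the paper says only ``easy exercise on integrals''. Your support-function computation $h_{\Sigma f}(\varphi)=\vol(f(X))^{-1}\int_{f(X)}h_{f^{-1}(y)}(\varphi)\,dy$, combined with the fibre convergence $X_t\cap \tilde f^{-1}(y)\to X\cap \tilde f^{-1}(y)$ for $y\in\inte(f(X))$ and dominated convergence, is a clean way to cash out that exercise.
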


\begin{proof} (a) Consider a real number $\delta>0$. For all sufficiently small $t>0$, we have $X\subset U_{\delta/2}(X_t)$ and $Y\subset U_{\delta/2}(Y_t)$, implying $X+Y\subset U_\delta(X_t+Y_t)$. For symmetrical reasons, we also have the inclusions $X_t+Y_t\subset U_\delta(X+Y)$ for all sufficiently small $t$.

\medskip\noindent In (b) the condition on the affine hulls is needed for the existence of an ambient vector space, where the convergence occurs. (We think of $\Hom(\Aff(X),W)$ as $W^{\dim X+1}$.) Without loss of generality we can assume $\Aff(X)=V$ and that there exists an affinely independent set $\{x_0,\ldots,x_d\}\subset X\bigcap\left(\bigcap_{0<t<\epsilon}X_t\right)$, $d=\dim V$. Then there are infinitesimally small perturbations of the images $f(x_i)$ as $t\to0$ such that the perturbed maps $f_t\in\Hom(V,W)$ first bring $f_t(V)$ into $\Aff(Y_t)$ and then ensure the inclusions $f_t(X_t)\subset Y_t$.  This implies the inclusion $\supset$, and the other inclusion is more straightforward. 

\medskip\noindent(c) This is an easy exercise on integrals. 
\end{proof}

\section{No $\Sigma$-covariance}\label{Kernel}

Let $P,Q,R$ and $f:P\to Q$ be in $\CONV$. The convex sets $\Sigma\Hom(R,f)$ and $\Hom(R,\Sigma f)$ are not completely unrelated: (i) if $P,Q,R$ are polytopes than so are these sets, (ii) both have dimension $(\dim R+1)\codim f$ (follows from Proposition \ref{barycenter}(b)), and (iii) $\Sigma\Hom(R,f)\cong\Hom(R,\Sigma f)$ in either of the following three cases: $P=P'\times Q$ and $f$ is the projection map, or $Q$ is a point, or $f$ is injective. But the similarities end here.

The following formula for a centrally symmetric $d$-polytope $S\subset\RR^d$ with respect to $0$ is given in \cite[Corollary 3.6]{Hompolytopes}:
\begin{equation}\label{diamond}
\Hom(S,[0,1])\cong\Diamond(S^{\o})
\end{equation}
where $S^{\o}$ is the \emph{polar} of $S$ and, for any polytope $T\subset\RR^d$ with $0\in\inte(T)$, $\Diamond(T)$ is the \emph{bipyramid} $\conv\big((T,0),(0,1),(0,-1)\big)\subset\RR^{d+1}$.

\begin{theorem}\label{main1}
Let $Q\subset\RR^2$ be a centrally symmetric polygon. Assume $P\subset\RR^3$ is a polytope, such that $\partial Q\times[0,\epsilon]\subset\partial P$ for some $\epsilon>0$, $P$ is combinatorially equivalent to a prism over $Q$, and the opposite facet $Q'\subset P$ is not parallel to $Q$; i.e., $P$ is a slant-truncated right prism over $Q$. Then $\#\vertex\big(\Sigma\Hom(Q,f)\big)\ge\#\vertex\big(\Hom(Q,\Sigma f)\big)+2$ for the orthogonal projection $f:P\to Q$. In particular,  $\Sigma\Hom(Q,f)\not\cong\Hom(Q,\Sigma f)$.
\end{theorem}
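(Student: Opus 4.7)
The plan is to compute both sides concretely and compare vertex counts.

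\emph{Step 1 (the easy side).} Each facet of $P$ projects under $f$ onto either $Q$ (the two caps) or an edge of $Q$ (the lateral facets, which are vertical by $\partial Q\times[0,\epsilon]\subset\partial P$), so the subdivision of $f(P)=Q$ induced by $f$-images of faces of $P$ is trivial. Proposition \ref{barycenter}(a) then identifies $\Sigma f$ with the single fiber $f^{-1}(y_0)$ over the barycenter $y_0$ of $Q$, a vertical segment in $\RR^3$ of positive length $T(y_0)$, where $T:Q\to\RR_{>0}$ is the affine height function of $Q'$, non-constant because $Q'\not\parallel Q$. Applying (\ref{diamond}) gives $\Hom(Q,\Sigma f)\cong\Hom(Q,[0,1])\cong\Diamond(Q^{\o})$, which has exactly $2n+2$ vertices.

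\emph{Step 2 (reducing to one fiber).} For $\phi\in\Hom(Q,Q)$, a lift of $\phi$ along $\Hom(Q,f)$ is specified by the third coordinate of the target, an affine function $h:Q\to\RR$. This identifies the fiber $F_\phi:=\Hom(Q,f)^{-1}(\phi)$ with the $3$-dimensional polytope of affine $h$ satisfying $0\le h(v)\le T(\phi(v))$ for every $v\in\vertex(Q)$. Proposition \ref{barycenter}(a) expresses $\Sigma\Hom(Q,f)$ as a positive Minkowski combination of such fibers $F_{\phi_i}$ over cell barycenters of a sufficiently fine refinement of the induced subdivision of $\Hom(Q,Q)$, and by Lemma \ref{basic-sum}(b) the theorem reduces to producing one cell-barycenter $\phi$ with $\#\vertex(F_\phi)\ge 2n+4$.

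\emph{Step 3 (Euler count on a generic fiber).} I pick $\phi$ so that the values $T(\phi(v_1)),\ldots,T(\phi(v_{2n}))$ are pairwise distinct; such $\phi$ form an open dense subset of $\Hom(Q,Q)$ since $T$ is non-constant, and are realized as cell barycenters after enough refinement. For such $\phi$ one verifies: (i) all $4n$ defining half-spaces are facet-defining; (ii) $h\equiv 0$ and $h=T\circ\phi$ are vertices of $F_\phi$ lying on all $2n$ lower, resp.\ all $2n$ upper, facets; and (iii) by genericity, every remaining vertex lies on exactly three facets. Plugging $F=4n$, two vertices of degree $2n$, and the rest of degree $3$ into $V-E+F=2$ and $2E=\sum_v\deg v$ yields $V=4n+2$, so $\#\vertex(F_\phi)=4n+2\ge 2n+4$ since $n\ge 2$. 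Combined with Step 1, this gives the desired strict inequality.

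The main technical obstacle is claim (iii) of Step 3: four-way concurrences of the $4n$ hyperplanes $\{h(v_i)=0\}$ and $\{h(v_i)=T(\phi(v_i))\}$ beyond the two apex configurations would force additional linear relations among the $T(\phi(v_i))$'s beyond the single relation coming from affinity of $T\circ\phi$ on $Q$. These relations cut out proper algebraic subsets of $\Hom(Q,Q)$, which the generic $\phi$ avoids.
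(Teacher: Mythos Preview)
Your argument is correct and gives a sharper count than the paper's. Both proofs share Steps 1 and 2: the identification $\Hom(Q,\Sigma f)\cong\Diamond(Q^{\o})$ with $2n+2$ vertices, and the reduction via Proposition~\ref{barycenter}(a) and Lemma~\ref{basic-sum}(b) to bounding the vertex count of a single generic fiber $F_\phi=\Hom(Q,f)^{-1}(\phi)$. The divergence is in how that fiber is analyzed. The paper interprets $F_\phi$ geometrically as the set of affine planes in $\RR^3$ meeting all vertical edges of the sub-prism $P_\alpha$, and then explicitly constructs tight planes by rotating $(\RR^2,0)$ about each base edge (the $z_i$) and similarly for the top (the $z'_i$); a separate argument using the slant of $Q'$ shows at least $2n+2$ of these are distinct, and the two cap planes give two more, yielding $\ge 2n+4$. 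Your approach instead reads $F_\phi$ as a $3$-polytope cut out by $4n$ half-spaces in the space of affine $h$, identifies the two apices $h\equiv 0$ and $h=T\circ\phi$ of degree $2n$, argues all remaining vertices are simple for generic $\phi$, and extracts the exact value $V=4n+2$ from Euler's relation. This is cleaner and stronger (for $n\ge 2$ it beats $2n+4$); in effect it shows that \emph{all} of the paper's candidate vertices $z_i,z'_j$ are distinct and, together with the two caps, exhaust $\vertex(F_\phi)$. One expositional wrinkle: the pairwise distinctness of the $T(\phi(v_i))$ that you single out is neither necessary nor sufficient for claim~(iii); the relevant generic locus is the complement of the finitely many hyperplanes in $\Hom(Q,Q)$ encoding the $2$-lower/$2$-upper concurrences, exactly as your final paragraph indicates (and there are $2n-3$, not one, affinity relations among the $T(\phi(v_i))$, though this does not affect the argument).
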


\begin{proof}
Without loss of generality we can assume that $0$ is at the center of $Q$. Let $Q$ be an $2n$-gon and $\{v_1,\ldots,v_{2n}\}=\vertex(Q)$, the indexing being cyclic and $\text{mod}(2n)$. The polar $Q^{\o}$ is also a centrally symmetric $2n$-gon. We will identify $Q$ with $(Q,0)$.

Denote $f_*=\Hom(Q,f)$. For any map $\alpha\in\Hom(Q,Q)$, the preimage $f_*^{-1}(\alpha)\subset\Hom(Q,P)$ is a subpolytope, generically of dimension 3. Next we prove the implication
\begin{equation}\label{many-vertices}
\rank\alpha=2\quad\Longrightarrow\quad\#\vertex(f_*^{-1}(\alpha))\ge 2n+4.
\end{equation}

Assume $\dim(\Im\alpha)=2$. Then the subpolygon $Q_\alpha:=\alpha(Q)\subset Q$ is isomoprhic to $Q$. Let $P_\alpha$ be the maximal truncated right prism inside $P$ with $Q_\alpha$ as the base. Denote by $Q_\alpha'$ the facet of $P_\alpha$, opposite to $Q_\alpha$. Let $w_i=\alpha(v_i)$ and $w_i'$ be the corresponding vertices of $Q_\alpha'$.

The elements of $f_*^{-1}(\alpha)$ can be interpreted as the affine planes in $H\subset\RR^3$, meeting all vertical edges of $P_\alpha$: if $\{x_i\}=(\alpha(v_i)\times\RR_{\ge0})\cap H$, then the map corresponding to $H$ is defined by $v_i\mapsto x_i$, $i=1,\ldots,2n$. After this interpretation, the vertices  of $f_*^{-1}(\alpha)$ correspond to the planes $H$ which do not fit in a \emph{smooth 1-family} of affine planes, satisfying the same condition. Here, under a `smooth 1-family' we mean a system $\{H_t\}_{(-1,1)}$ of affine planes in $\RR^3$ such that the intersection point $H_t\cap\big(\RR_{\ge0}(0,0,1)\big)$ and  the unit normals to $H_t$ are both smooth functions of $t$, and we say that $H$ `fits' in such a system if $H=H_0$. This \emph{smooth perturbation criterion} for the vertices of a polytope is crucial  in \cite{Hompolytopes,Vertices} for studying the vertex sets of various hom-polytopes. The planes $H$, corresponding to the vertices of $f_*^{-1}(\alpha)$, will be called \emph{tight}. 

For every index $i$, we can rotate the coordinate plane $(\RR^2,0)$ in $\RR^3$ about the axis $\Aff(w_i,w_{i+1})$, staying within the family of planes corresponding to $f_*^{-1}(\alpha)$, until we hit the polygon $Q_\alpha'$. Let $H_i$ be the corresponding extremal position of the rotated plane. Then $H_i$ is tight, representing a vertex $z_i\in f_*^{-1}(\alpha)$. Similarly, every edge $[w'_i,w'_{i+1}]\subset Q_\alpha'$ gives rise to a vertex of $z_i'=f_*^{-1}(\alpha)$. We have $z_i\not=z_j$ and $z'_i\not=z'_j$ for $i\not=j$, and $z_i=z'_j$ if the plane $H_i=H'_j$ contains the corresponding edges of $Q_\alpha$ and $Q'_\alpha$. In particular, if there is an index $i$, such that $H_i\cap Q_\alpha'\in\vertex(Q_\alpha')$, then
\begin{equation}\label{2n+1}
\#\{z_1,\ldots,z_{2n},z'_1,\ldots,z'_{2n}\}\ge 2n+1.
\end{equation}

The existence of such an index follows from the condition that the planes $(\RR^2,0)$ and $\Aff(Q')$ are not parallel. In fact, let $w_k'$ be on the minimal height among the vertices of $Q_\alpha'$, as measured by the third coordinate. There can be at most one more vertex of $Q_\alpha'$ on the same height, and if such exists it must be adjacent to $w_k'$. We can assume that $w_{k+1}'$ is strictly higher than $w_k'$. Consider the plane $H_{n+k}$ through the edge of $Q_\alpha$, opposite to $[w_k,w_{k+1}]$; see Figure 1. 

\begin{figure}[h!]
\caption{The plane $H_{n+k}$}
\vspace{.15in}
\includegraphics[
scale=3.5]{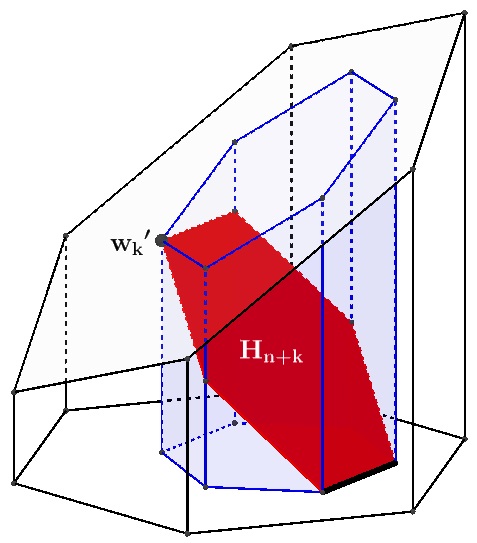}
\end{figure}

The height function on the $2n$-gon $H_{n+k}\cap\big(Q\times\RR_{\ge0}\big)$ is maximized along the segment $H_{n+k}\cap\big([w_k,w_{k+1}]\times\RR_{\ge0}\big)$. In paricular, $H_{n+k}$ has the desired property: $H_{n+k}\cap Q_\alpha'=\{w_k'\}$.

Next we strengthen (\ref{2n+1}) to the inequality
$\#\{z_1,\ldots,z_{2n},z'_1,\ldots,z'_{2n}\}\ge 2n+2$ by observing that there is always a second pair of vertices $(w',w'')$ of $Q'_\alpha$ with $w'$ on the minimal height among the vertices of $Q'_\alpha$ and $w''$ adjacent and strictly higher than $w'$. (If $Q'_\alpha$ has two vertices on the minimal height then $w'\not=w'_k$.)

We also have the two vertices of $f_*^{-1}(\alpha)$, corresponding to the planes $(\RR^2,0)$ and $\Aff(Q')$. Since they do not belong to  $\{z_1,\ldots,z_{2n},z'_1,\ldots,z'_{2n}\}$, we derive (\ref{many-vertices}). 

For a generic element $\alpha\in\Im(f_*)$, we have $\dim(\Im\alpha)=2$. Therefore, the inequality (\ref{many-vertices}), Lemma \ref{basic-sum}(b), and Proposition \ref{barycenter}(a) imply $\#\vertex\big(\Sigma\Hom(Q,f)\big)\ge2n+4$.

On  the other hand, since $\Sigma f\cong[0,1]$, (\ref{diamond}) implies $\Hom(Q,\Sigma f)\cong\Diamond(Q^{\o})$, and this in turn implies
$\#\vertex(\Hom(Q,\Sigma f))=2n+2$.
\end{proof}

\section{Minkowski sum covariance}\label{Sum-covariance}

Before developing a correct version of the fiber equality in Section \ref{Conker}, we investigate the functorial behavior of the Minkowski sum. We will need the following well known fact (e.g.,  \cite[Proposition 2.1]{Hompolytopes}):
\begin{lemma}\label{folklore}
For $P,Q\in\POL$, the polytope $\Hom(P,Q)$ has dimension $(\dim P+1)\dim Q$ and its facets are the subsets 
$$
H(v,F):=\{\phi\in\Hom(P,Q)\ |\ \phi(v)\in F\}\subset\Hom(P,Q),
$$
where $v\in\vertex(P)$ and $F\in\FF(Q)$.
\end{lemma}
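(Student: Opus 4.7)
The plan is to work inside the ambient affine space $\Hom(\Aff(P),\Aff(Q))$ of all affine maps $\Aff(P)\to\Aff(Q)$, which has dimension $(\dim P+1)\dim Q$ because such a map is freely determined by prescribing images of $\dim P+1$ affinely independent points, each image ranging over a $(\dim Q)$-dimensional affine space. First, I would show that $\Hom(P,Q)$ is a full-dimensional compact convex polytope in this ambient space. Compactness and convexity are clear from the intersection formula given in the preliminaries. For full dimension, pick $q\in\inte(Q)$ and consider the constant map $\phi_q$; any sufficiently small perturbation $\phi$ of $\phi_q$ sends $\vertex(P)$ into a small neighbourhood of $q$ inside $\inte(Q)$, hence $\phi(P)=\conv(\phi(\vertex(P)))\subset Q$. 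Thus $\phi_q$ has a full-dimensional open neighbourhood inside $\Hom(P,Q)$.

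Next I would write down the facet-defining inequalities. For each $F\in\FF(Q)$, fix a linear functional $\ell_F$ on $\Aff(Q)$ and constant $c_F$ so that $F=\{y\in Q\mid\ell_F(y)=c_F\}$ and $Q\subset\{\ell_F\le c_F\}$. Since $P=\conv(\vertex(P))$ and affine maps commute with convex combinations, the condition $\phi(P)\subset Q$ is equivalent to the finite system $\ell_F(\phi(v))\le c_F$ for all $v\in\vertex(P)$ and $F\in\FF(Q)$. The map $\phi\mapsto\ell_F(\phi(v))$ is a nonzero affine functional on $\Hom(\Aff(P),\Aff(Q))$ (evaluation at $v$ is surjective onto $\Aff(Q)$, and $\ell_F$ is nonzero), so each inequality cuts out a genuine hyperplane $\hat H(v,F)$ of dimension $(\dim P+1)\dim Q-1$, and $H(v,F)=\Hom(P,Q)\cap\hat H(v,F)$.

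The main point — and the step requiring the most care — is to show that for every pair $(v,F)$ the inequality is irredundant, equivalently that $H(v,F)$ has codimension one in $\Hom(P,Q)$. The tactic is to build a witness $\phi_{v,F}\in\Hom(P,Q)$ that saturates the single inequality attached to $(v,F)$ and strictly satisfies all the others. Using that $v$ is a vertex, choose an affine function $\beta_v$ on $\Aff(P)$ with $\beta_v(v)=1$ and $\beta_v(w)\in(-\eta,0]$ for every $w\in\vertex(P)\setminus\{v\}$, where $\eta>0$ is to be chosen. Pick $q\in\inte(Q)$ and $q_F\in\inte(F)$, and set
\[
\phi_{v,F}(x)=q+\beta_v(x)(q_F-q).
\]
Then $\phi_{v,F}(v)=q_F\in\inte(F)$, while for every other vertex $w$ the image is a small perturbation of $q$ inside $\inte(Q)$ provided $\eta$ is small. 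Since the image of a point $x=\lambda v+\sum_{w\ne v}\lambda_w w\in P$ is $\lambda q_F+\sum_{w\ne v}\lambda_w\phi_{v,F}(w)$, it lies in $\inte(Q)$ unless $\lambda=1$, in which case it equals $q_F\in\inte(F)\subset\partial Q$. Hence $\phi_{v,F}\in\Hom(P,Q)$, and among all inequalities $\ell_{F'}(\phi(v'))\le c_{F'}$ only the one for $(v,F)$ is saturated. By continuity, every affine map sufficiently close to $\phi_{v,F}$ inside $\hat H(v,F)$ still lies in $\Hom(P,Q)$, which forces $H(v,F)$ to have full dimension $(\dim P+1)\dim Q-1$ in $\hat H(v,F)$.

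Finally, I would conclude: the displayed system of inequalities is a complete H-description of $\Hom(P,Q)$, and the previous paragraph shows each of them is facet-defining, so the sets $H(v,F)$ exhaust $\FF(\Hom(P,Q))$. The principal obstacle I anticipate is the witness construction — choosing $\beta_v$ and the magnitude of $\eta$ carefully enough so that $\phi_{v,F}$ really lands in $Q$ and saturates only the target inequality; once the affine function $\beta_v$ with a narrow negative range is in hand, the rest is convexity bookkeeping.
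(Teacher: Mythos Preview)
The paper does not supply its own proof; the lemma is quoted as a known fact from \cite[Proposition 2.1]{Hompolytopes}. Your overall strategy is the standard one and is correct in outline: full-dimensionality via constant maps into $\inte(Q)$, the H-description $\ell_F(\phi(v))\le c_F$ indexed by $(v,F)\in\vertex(P)\times\FF(Q)$, and irredundancy via an explicit witness $\phi_{v,F}$ in the relative interior of $H(v,F)$.

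There is, however, a gap in the witness construction. You claim one can choose an affine $\beta_v$ with $\beta_v(v)=1$ and $\beta_v(w)\in(-\eta,0]$ for every other vertex $w$, with $\eta$ small. This fails already for the unit square $P=[0,1]^2$ with $v=(0,0)$: affinity forces
\[
\beta_v(1,1)=\beta_v(1,0)+\beta_v(0,1)-\beta_v(0,0),
\]
so if $\beta_v(1,0),\beta_v(0,1)\le 0$ then $\beta_v(1,1)\le -1$, and no $\eta<1$ works. The fix is to aim for the range $[0,1)$ instead of $(-\eta,0]$: pick any affine $\alpha$ with $\alpha(v)=1$ and $\alpha(w)<1$ for $w\ne v$ (this exists since $v$ is a vertex), set $m=\min_{w\ne v}\alpha(w)$, and take $\beta_v=(\alpha-m)/(1-m)$. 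Then $\beta_v(v)=1$ and $\beta_v(w)\in[0,1)$, and your map $\phi_{v,F}(x)=q+\beta_v(x)(q_F-q)$ sends $v$ to $q_F\in\inte(F)$ and every other vertex to a point of the half-open segment $[q,q_F)\subset\inte(Q)$. With this correction the rest of your argument goes through unchanged.
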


\begin{lemma}\label{minkowskisum}
Let $Q$ and $R$ be polytopes in a vector space $V$. Then, for any polytope  $P$, we have $\Hom(P,Q+R)=\Hom(P,Q)+\Hom(P,R)$.
\end{lemma}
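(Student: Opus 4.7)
The plan is to establish both inclusions. The containment $\Hom(P,Q)+\Hom(P,R)\subseteq\Hom(P,Q+R)$ is immediate: for $\phi\in\Hom(P,Q)$ and $\psi\in\Hom(P,R)$, the pointwise sum $\phi+\psi$ is affine on $P$ and has image in $\phi(P)+\psi(P)\subseteq Q+R$, so $\phi+\psi\in\Hom(P,Q+R)$.

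For the reverse inclusion, my plan is to compare support functions at the facet-normal directions singled out by Lemma~\ref{folklore}. For each $v\in\vertex(P)$ and $u\in V^*$, let $L_{v,u}\colon\phi\mapsto\langle u,\phi(v)\rangle$. The upper bound $\langle u,\phi(v)\rangle\le h_Y(u)$ together with the constant map $\phi\equiv y^*$ (for $y^*\in Y$ maximizing $\langle u,\cdot\rangle$, which lies in $\Hom(P,Y)$ and realizes the bound) gives $h_{\Hom(P,Y)}(L_{v,u})=h_Y(u)$. Specializing $Y=Q+R$ and using $h_{Q+R}=h_Q+h_R$, together with additivity of support functions under Minkowski sum on the right-hand side, yields $h_{\Hom(P,Q+R)}(L_{v,u_F})=h_Q(u_F)+h_R(u_F)=h_{\Hom(P,Q)+\Hom(P,R)}(L_{v,u_F})$ for every $F\in\FF(Q+R)$. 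Since Lemma~\ref{folklore} tells me the $L_{v,u_F}$ exhaust the facet-normal directions of $\Hom(P,Q+R)$, this half-space-by-half-space matching should pin the two polytopes together.

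The main obstacle will be confirming that the Minkowski sum $\Hom(P,Q)+\Hom(P,R)$ has no extra facet-normals beyond the $L_{v,u_F}$ family: by Lemma~\ref{basic-sum}(a), its normal fan is the common refinement of $\cN(\Hom(P,Q))$ and $\cN(\Hom(P,R))$, and in high-dimensional ambient spaces such refinements can in principle create new rays from combinations of lower-dimensional faces. To handle this, my plan is to exploit the transparent embedding $\Hom(P,Y)\cong Y^{\vertex(P)}\cap L_P$, where $L_P\subset V^{\vertex(P)}$ is the linear subspace encoding the affine relations among vertices of $P$, so that the desired equality translates into $(Q^{\vertex(P)}+R^{\vertex(P)})\cap L_P=(Q^{\vertex(P)}\cap L_P)+(R^{\vertex(P)}\cap L_P)$; this can then be attacked either combinatorially using the product structure of $Q^{\vertex(P)}$ and $R^{\vertex(P)}$, or by a density argument via Lemma~\ref{continuity}, perturbing $Q,R$ into generic position and passing to the Hausdorff limit.
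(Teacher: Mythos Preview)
Your plan is sound, and the route you converge on---option (b), perturbing $Q$ and $R$ via Lemma~\ref{continuity} to land in a generic position before passing to the limit---is exactly what the paper does. The paper replaces $Q,R$ by $Q_t=Q+tR$ and $R_t=R+tQ$, which are full-dimensional with $\cN(Q_t)=\cN(R_t)$; in that situation Lemma~\ref{folklore} forces $\cN(\Hom(P,Q_t))=\cN(\Hom(P,R_t))=\cN(\Hom(P,Q_t+R_t))$, so the Minkowski sum $\Hom(P,Q_t)+\Hom(P,R_t)$ acquires no new facet normals, and your support-function identity $h_{\Hom(P,Y)}(L_{v,u})=h_Y(u)$ then finishes immediately. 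The paper phrases the endgame as ``interiors of corresponding facets meet'' rather than in support-function language, but these are equivalent once the normal fans coincide.

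Two remarks. First, your sentence ``this half-space-by-half-space matching should pin the two polytopes together'' is not yet justified at the point you write it: matching support functions only at the facet normals of the \emph{outer} polytope $\Hom(P,Q+R)$ does not force equality (a triangle inscribed in a square, touching all four edges, has the same support values at the square's four normals). You correctly identify this gap in the next paragraph, but the logical dependence should be made explicit: the matching suffices \emph{only after} you know the inner polytope $\Hom(P,Q)+\Hom(P,R)$ has no extra facet normals.

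Second, your option (a)---attacking $(Q^{\vertex(P)}+R^{\vertex(P)})\cap L_P=(Q^{\vertex(P)}\cap L_P)+(R^{\vertex(P)}\cap L_P)$ combinatorially---is just a reformulation, not a simplification: the identity $(A+B)\cap L=(A\cap L)+(B\cap L)$ is false in general, and the product structure alone does not obviously rescue it. The paper explicitly remarks (Remark~\ref{why-limits}) that the limit reduction ``seems unavoidable'' precisely because the normal fan of $\Hom(P,Q)+\Hom(P,R)$ is uncontrolled for general $Q,R$. So unless you see a genuinely new idea there, commit to option (b).
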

\noindent (The Minkowski sum of hom-sets is taken in the vector space of affine maps $P\to V$.) 

\begin{proof}
First we reduce the general case to the case when $Q$ and $R$ are full-dimensional and $\cN(Q)=\cN(R)$. 

Without loss of generality, $0\in Q\cap R$ and $V=\RR Q+\RR R$. For a real number $t>0$, consider the polytopes $Q_t=Q+t R$ and $R_t=R+t Q$. We have:
\begin{enumerate}[\rm$\centerdot$]
\item $\dim(Q_t)=\dim(R_t)=\dim V$,
\item $\cN(Q_t)=\cN(R_t)$ (Lemma \ref{basic-sum}(a)),
\item $\lim\limits_{t\to0}Q_t=Q$ and $\lim\limits_{t\to0}R_t=R$.
\end{enumerate}

By Lemma \ref{continuity}(a,b), it is enough to prove Lemma \ref{minkowskisum} for $Q_t$ and $R_t$ with $t>0$ sufficiently small. This way we have reduced the general case to full-dimensional polytopes with equal normal fans.

By Lemma \ref{basic-sum}(a), $\cN(Q)=\cN(R)=\cN(Q+R)$. By Lemma \ref{folklore}, this equality implies that, for a vertex $x\in P$ and a pair of corresponding facets $F\subset Q$ and $G\subset R$, the three facets $H(x,F)\subset\Hom(P,Q)$, $H(x,G)\subset\Hom(P,R)$, and $H(x,F+G)\subset\Hom(P,Q+R)$ are parallel, i.e., represent the same 1-cone in the common normal fan (notation as in Lemma \ref{folklore}). Then, by Lemmas \ref{folklore} and \ref{basic-sum}(a), we have
\begin{align*}
\cN(\Hom(P,Q))=\cN(\Hom(P,R))&=\cN(\Hom(P,Q+R))\\
&=\cN(\Hom(P,Q)+\Hom(P,R)).
\end{align*}
Consequently, it is enough to show that the interiors of corresponding pairs of facets of $\Hom(P,Q+R)$ and $\Hom(P,Q)+\Hom(P,R)$ meet.

Lemma \ref{folklore} implies that the interior points of the facets $H(x,F)\subset\Hom(P,Q)$ and $H(x,G)\subset\Hom(P,R)$ are, respectively, the sets
\begin{align*}
&\big\{f\in H(x,F)\ |\ f(x)\in\inte(F),\ f\big(\vertex(P)\setminus\{x\}\big)\subset\inte(Q)\big\},\\
&\big\{g\in H(x,G)\ |\ g(x)\in\inte(G),\ g\big(\vertex(P)\setminus\{x\}\big)\subset\inte(R)\big\}.
\end{align*}
By Lemma \ref{basic-sum}(c), for such $f$ and $g$, the sum $f+g$ is in the interior of the corresponding facet of $\Hom(P,Q)+\Hom(P,R)$. But it is also in the interior of the facet $H(x,F+G)\subset\Hom(P,Q+R)$ by the similar description of the latter.
\end{proof}

\begin{remark}\label{why-limits}
In the proof of Lemma \ref{minkowskisum}, the initial reduction to polytopes with equal normal fans seems unavoidable. The reason for this is the lack of control of the normal fan of $\Hom(P,Q)+\Hom(P,R)$ for general $Q$ and $R$. It is this reduction step where limit sets enter the picture, even if one wants to prove Lemma \ref{minkowskisum} for full-dimensional polytopes. On the other hand, a convex set is the same as a filtered union of polytopes. This, together with the uniqueness of limits, explains why $\CONV$ is the optimal framework for our functorial approach.
\end{remark}

\begin{example}\label{no-cintravariance} 
The contravariant version of Lemma \ref{minkowskisum} is false; i.e., in general, $\Hom(P,R)+\Hom(Q,R)\not\cong\Hom(P+Q,R)$. Consider the following rectangles: $P=\conv\big((2,1),(2,-1),(-2,1),(-2,-1)\big)$, $Q=\conv\big((1,2),(1,-2),(-1,2),(-1,-2)\big)$, $R=[0,1]$.
Since $P+Q=[-3,3]^2$, by (\ref{diamond}) we have $\Hom(P+Q,R)\cong\Diamond\big([-1,1]^2\big)$. On the other hand, the polar polytopes $P^{\o}$ and $Q^{\o}$ are central parallelograms in $\RR^2$, related by a $90^{\o}$-rotation and with unequal diagonals along the coordinate axes. In particular, $P^{\o}+Q^{\o}$ is a central octagon in $\RR^2$. Together with (\ref{diamond}), this implies that the bipyramids  $\Hom(P,R)\cong\Diamond(P^{\o})$ and $\Hom(Q,R)=\Diamond(Q^{\o})$ are related by a $90^{\o}$-rotation around the axis $\RR(0,0,1)$, implying in turn $\Hom(P,R)+\Hom(Q,R)\cong\Diamond(P^{\o}+Q^{\o})$. But the polytopes $\Diamond(P^{\o}+Q^{\o})$ and $\Diamond\big([-1,1]^2\big)$ have different numbers of facets: 16 vs. 8.
\end{example}

\begin{corollary}\label{linear}
\begin{enumerate}[\rm(a)]
\item
Let $Q_1,\ldots,Q_m\in\CONV$ be in a same vector space. Then
$\Hom(-,Q_1+\cdots+Q_m)\cong\Hom(-,Q_1)+\cdots+\Hom(-,Q_m)$ in $\CONV^{\CONV^{\op}}$, or in $\POL^{\POL^{\op}}$ if $Q_1,\ldots,Q_m\in\POL$.
\item
Let $\epsilon,\lambda_1,\ldots,\lambda_k>0$, $0<t<\epsilon$, and $i=1,\ldots,k$. Assume $X,X_t,Y_i,Y_{it}\in\CONV$, $\Aff(X)=\Aff(X_t)$, and $Y_i,Y_{it}\subset V\in\VECT$. Assume $\lim\limits_{t\to0}X_t=X$ and $\lim\limits_{t\to0}Y_{it}=Y_i$. Then 
$\lim\limits_{t\to0}
\Hom(X_t,\lambda_1Y_{t1}+\cdots+\lambda_kY_{tk})=
\lambda_1\Hom(X,Y_1)+\cdots+\lambda_k\Hom(X,Y_k)$  in $\Hom(\Aff(X),V)$.
\end{enumerate}
\end{corollary}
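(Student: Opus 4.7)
The plan is to deduce both parts from Lemma \ref{minkowskisum} by iteration in the polytopal case and by Hausdorff approximation in the $\CONV$-case. For part (a), I would first handle polytopes by induction on $m$: the base case $m=2$ is exactly Lemma \ref{minkowskisum} evaluated objectwise, and the inductive step applies Lemma \ref{minkowskisum} to $Q_1+\cdots+Q_{m-1}$ and $Q_m$. Naturality in the contravariant argument is automatic, since precomposition $\phi\mapsto\phi\circ g$ along an affine map $g:P'\to P$ is the restriction of a linear map $\Hom(\Aff(P),V)\to\Hom(\Aff(P'),V)$ and therefore commutes with Minkowski sums.

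To extend to $\CONV$, fix $X\in\CONV$ and choose a polytope approximation $X_t\to X$ with $\Aff(X_t)=\Aff(X)$ for all small $t$ (for example, $X_t=\conv(P_t\cup\{x_0,\ldots,x_d\})$, where $P_t\nearrow X$ is any nested polytope exhaustion and $\{x_0,\ldots,x_d\}\subset X$ is a fixed affinely independent set spanning $\Aff(X)$). Pick analogous polytope approximations $Q_{it}\to Q_i$ in $V$. The polytopal case gives
\[ \Hom(X_t,Q_{1t}+\cdots+Q_{mt})=\Hom(X_t,Q_{1t})+\cdots+\Hom(X_t,Q_{mt}) \]
for every $t$. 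Iterating Lemma \ref{continuity}(a) yields $\sum_i Q_{it}\to\sum_i Q_i$; then Lemma \ref{continuity}(b) applied to both sides (combined with Lemma \ref{continuity}(a) once more on the right to commute limits with the Minkowski sum) produces the objectwise $\CONV$-identity. Naturality is again precomposition.

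Part (b) is then immediate: since $\Hom(X,\lambda Y)=\lambda\Hom(X,Y)$ for $\lambda>0$ (as subsets of $\Hom(\Aff(X),V)$, since $\phi(X)\subset\lambda Y$ iff $\lambda^{-1}\phi\in\Hom(X,Y)$), part (a) applied to the convex sets $\lambda_i Y_{it}$ gives
\[ \Hom(X_t,\lambda_1Y_{1t}+\cdots+\lambda_kY_{kt})=\lambda_1\Hom(X_t,Y_{1t})+\cdots+\lambda_k\Hom(X_t,Y_{kt}), \]
and letting $t\to0$ with Lemma \ref{continuity}(b) on each summand, positive scaling preserving Hausdorff convergence, and Lemma \ref{continuity}(a) collecting the summands, one obtains the desired limit formula in $\Hom(\Aff(X),V)$.

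The main obstacle I anticipate is the $\CONV$-case of part (a): one must ensure the polytope approximations of $X$ preserve the affine hull (the hypothesis of Lemma \ref{continuity}(b)) and must iterate Lemma \ref{continuity}(a) cleanly over $m$ families of Minkowski summands. Both are routine but are the points that merit careful justification, and they illustrate why the paper insists, as noted in Remark \ref{why-limits}, that the $\CONV$-framework is natural for such functorial identities.
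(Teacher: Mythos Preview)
Your proposal is correct and follows essentially the same route as the paper: iterate Lemma \ref{minkowskisum} for the polytopal case of (a), note that precomposition is linear and hence natural, and handle the $\CONV$-case (and part (b)) by polytopal approximation via Lemma \ref{continuity}(a,b) together with the identity $\Hom(X,\lambda Y)=\lambda\Hom(X,Y)$. The only cosmetic difference is that the paper phrases (a) as ``the tautological inclusion $\sum_i\Hom(P,Q_i)\hookrightarrow\Hom(P,\sum_iQ_i)$ is surjective and natural in $P$'' and leaves the $\CONV$-extension of (a) implicit, whereas you spell out the approximation explicitly; your version is the more careful one.
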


\begin{proof}
(a) By Lemma \ref{minkowskisum}, the tautological embedding
$$
\xymatrix{
\Hom(P,Q_1)+\cdots+\Hom(P,Q_m)\ar@{^{(}->}[r]&\Hom(P,Q_1+\cdots+Q_m)
}
$$
is surjective. But it is also natural in $P$.

\medskip\noindent (b) In view of Lemmas \ref{continuity}(a,b) and \ref{minkowskisum}, one only needs to represent the convex sets as limits of polytopes and use $\Hom(X,\lambda Y)=\lambda\Hom(X,Y)$.
\end{proof}

\section{Affine-compact kernel}\label{Conker} 

In analogy with the functor $\Hom_\VECT(-,\ker f):\VECT\to\VECT$, for a map $f:X\to Y$ in $\CONV$, we introduce the following contravariant functor:
\begin{align*}
&\ker^*(f):\CONV\to\SETS,\\
&\ker^*(f)(Z)=\{g:Z\to X\ |\ g\ \text{affine and }\ f(g(Z))\ \text{a singleton}\},\\
&\ker^*(f)(h):g\mapsto gh\ \ \text{for}\ \ h:Z'\to Z\ \ \text{in}\ \ \CONV.
\end{align*}

An alternative definition is provided by the following \emph{pull-back diagram} in $\SETS$, natural in $Z$, which also introduces the important map $\Hom(Z,f)^{\ev}$:
\begin{equation}\label{pull-back}
\xymatrix{
\ker^*(f)(Z)\ar@{^{(}->}[r]\ar[dd]_{\Hom(Z,f)^{\ev}}&\Hom(Z,X)\ar[dd]^{\Hom(Z,f)}\\
\\
Y\ar[r]_{\text{const.}}&\Hom(Z,Y)
}
\end{equation}
where (i)  to every point $y\in Y$ the bottom map assigns the constant map $Z\to Y$ with value $y$, and (ii) $\Hom(Z,f)^{\ev}$ is the evaluation map $g\mapsto(fg)(Z)$.

\begin{proposition}\label{Functor}
In the notation introduced above,
\begin{enumerate}[\rm(a)]
\item
$\ker^*(f)$ is in $\CONV^{\CONV^{\op}}$, or
in $\POL^{\POL^{\op}}$ if $f$ is in $\POL$.
\item $\ker^*(f)$ is not a representable functor, unless $f(X)$ is a singleton.
\item In $\CONV^{\CONV^{\op}}$, or in $\POL^{\POL^{\op}}$ if $f$ is in $\POL$, we have
$$
\ker^*(f)=\lim\limits_{\longrightarrow}\big(\Hom\big(-,f^{-1}(y)\big)\ |\ y\in Y\big).
$$ 
\item $\dim(\ker^*(f)(Z))=(\dim Z+1)\codim f+\dim(f(X))$,
\item $\Im\big(\Hom(Z,f)^{\ev}\big)=f(X)$.
\end{enumerate}
\end{proposition}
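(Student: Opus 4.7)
The plan is to use the pull-back square (\ref{pull-back}) as the principal tool: it yields (a), (e), and (d) directly, and (b) then follows from (d) by a dimension count. Part (c) is established separately by a universal-property argument.

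For (a), the map $Y\to\Hom(Z,Y)$, $y\mapsto\mathrm{const}_y$, is an affine embedding onto the convex compact (polytopal, if $Y\in\POL$) subset of constant maps. Since finite limits in $\POL$ and $\CONV$ agree set-theoretically with those in $\SETS$, the square (\ref{pull-back}) exhibits $\ker^*(f)(Z)$ as the preimage of this subset under the affine map $\Hom(Z,f)$, so $\ker^*(f)(Z)\in\CONV$ (respectively $\POL$). For $h\colon Z'\to Z$, the induced map $g\mapsto gh$ restricts the standard affine precomposition $\Hom(Z,X)\to\Hom(Z',X)$, and $f\circ g$ constant forces $f\circ gh$ constant.

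For (e), every $y=f(x)\in f(X)$ is the value of $\Hom(Z,f)^{\ev}$ on the constant map $Z\to\{x\}$, while conversely $\Hom(Z,f)^{\ev}(g)=f(g(Z))\subseteq f(X)$. For (d), $\Hom(Z,f)^{\ev}$ is an affine surjection of $\ker^*(f)(Z)$ onto $f(X)$ whose fiber over any $y\in\inte f(X)$ is $\Hom(Z,f^{-1}(y))$, of dimension $(\dim Z+1)\codim f$ by the evident extension of Lemma \ref{folklore} to $\CONV$; adding $\dim f(X)$ yields the claim. For (b), combining (d) with Lemma \ref{folklore} shows that an isomorphism $\ker^*(f)\cong\Hom(-,K)$ would force $(\dim Z+1)\dim K=(\dim Z+1)\codim f+\dim f(X)$ for every $Z$; evaluating at $\dim Z=0$ and $\dim Z=1$ forces $\dim f(X)=0$.

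For (c), each inclusion $f^{-1}(y)\hookrightarrow X$ (for $y\in f(X)$, the only indices giving nonempty fibers) induces, by post-composition, a natural transformation $\iota_y\colon\Hom(-,f^{-1}(y))\to\ker^*(f)$, and the collection $\{\iota_y\}$ forms a cocone. For any competing cocone $\big(\alpha_y\colon\Hom(-,f^{-1}(y))\to G\big)$, the Yoneda lemma identifies each $\alpha_y$ with a unique element $\tilde\alpha_y\in G(f^{-1}(y))$. Every $g\in\ker^*(f)(Z)$ factors uniquely as $g=\iota_y\circ\bar g$, where $y=f(g(Z))$ and $\bar g\colon Z\to f^{-1}(y)$ is the corestriction, so the rule $g\mapsto G(\bar g)(\tilde\alpha_y)$ defines the unique natural transformation $\ker^*(f)\to G$ through which all the $\alpha_y$ factor. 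The main obstacle will be the bookkeeping: one must verify naturality in $Z$, check uniqueness, and ensure that the construction lives in the enriched functor category ($\POL^{\POL^{\op}}$ or $\CONV^{\CONV^{\op}}$) rather than only in presheaves of sets.
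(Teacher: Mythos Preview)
Your proof is correct and follows essentially the same route as the paper: parts (a), (b), (d), (e) are handled exactly as in the text (pull-back square, dimension count via Lemma~\ref{folklore}, constant maps), and for (c) the paper simply declares the colimit equality ``straightforward,'' whereas you spell out the Yoneda/universal-property verification in detail. Your caveat about checking that the mediating transformation lives in the enriched functor category rather than merely in $\SETS$-valued presheaves is a fair point that the paper glosses over, but it does not change the underlying argument.
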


\begin{proof} (a) Since the assignment $h\mapsto gh$ is an affine function of $g$, we only need to show $\ker^*(f)(Z)\in\CONV$, with  $\ker^*(f)(Z)\in\POL$ for $X,Y,Z\in\POL$. But because the limits in $\SETS$, $\CONV$, $\POL$ agree (Section \ref{preliminaries}), this claim follows from the pull-back diagram (\ref{pull-back}), where the right and bottom arrows are affine maps.

\medskip\noindent(b) If there is an `affine-compact kernel' object $\ker(f)\in\CONV$, such that $\ker^*(f)=\Hom(-,\ker(f))$, then we have $\dim(\ker^*f(Z))\big)=(\dim Z+1)\dim(\ker f)$ (Lemma \ref{folklore}). But by (d), $\dim(\ker^* f(Z))=(\dim Z+1)\codim f+\dim(f(X))$ for \emph{every} $Z$. This is a contradiction, unless $\dim(f(X))=0$.

\medskip\noindent(c) The colimit equality is straightforward. Alternatively, the $\CONV$- and $\POL$-contravariant versions of the standard colimit representations, given in \cite[Ch.3, \S7]{Categories}, produces a much larger non-discrete category on which the corresponding universal co-cone is based. However, the colimits over the connected components of that category are exactly the representable functors on the right hand side of (c).

\medskip\noindent(d) This follows from the dimension formula in Lemma \ref{folklore} and the equality 
$$
\dim(f^{-1}(y))=\codim f
$$
for generic $y\in f(X)$.

\medskip\noindent(e) The inclusion $\Im\big(\Hom(Z,f)^{\ev}\big)\subset f(X)$ is obvious and the opposite inclusion follows by considering the constant maps $Z\to X$. 
\end{proof}

Next, for a map $f:X\to Y$ in $\CONV$, we introduce the following functor 
$$
\Sigma\Hom(-,f)^{\ev}\in\CONV^{\CONV^{\op}}.
$$
To $Z\in\CONV$ it assigns $\Sigma\Hom(Z,f)^{\ev}$; notation as in the diagram (\ref{pull-back}). For an affine map $h:Z'\to Z$, we first work out the polytopal case $X,Y,Z\in\POL$. By Proposition \ref{Functor}(e), $\Im\big(\Hom(Z,f)^{\ev}\big)=\Im\big(\Hom(Z',f)^{\ev}\big)=f(X)$. Let $\sigma_1,\ldots,\sigma_n$ be the maximal cells of a subdivision of $f(X)$, which subdivides the images of faces of both polytopes $\Hom(Z,\ker^*(f))$ and $\Hom(Z',\ker^*(f))$. Then, using Proposition \ref{barycenter}(a), together with the notation introduced there, we can define the map $\Sigma\Hom(h,f)^{\ev}:\Sigma\Hom(Z,f)^{\ev}\to\Sigma\Hom(Z',f)^{\ev}$ as follows:
$$
\xymatrix{
\sum_{j=1}^n\frac{\vol(\sigma_j)}{\vol(f(X))}\cdot\big(\Hom(Z,f)^{\ev}\big)^{-1}(x_j)\ar@{=}[r]\ar@{=}[d]&
\Sigma\Hom(Z,f)^{\ev}\\
\sum_{j=1}^n\frac{\vol(\sigma_j)}{\vol(f(X))}\cdot \Hom(Z,f^{-1}(x_j))\ar[r]^{-\ \circ\ h}&
\sum_{j=1}^n\frac{\vol(\sigma_j)}{\vol(f(X))}\cdot \Hom(Z',f^{-1}(x_j))\ar@{=}[d]\\
\Sigma\Hom(Z',f)^{\ev}&\sum_{j=1}^n\frac{\vol(\sigma_j)}{\vol(f(X))}\cdot\big(\Hom(Z',f)^{\ev}\big)^{-1}(x_j)\ar@{=}[l]
}
$$
Checking that we get a functor in $\POL^{\POL^{op}}$ is straightforward, with a similar use of Proposition \ref{barycenter}(a).

Now assume $Z$  and $f:X\to Y$ are in $\CONV$, $Z=\lim\limits_{t\to0}Z_t$, and $X=\lim\limits_{t\to0}X_t$, where $Z_t\subset Z$ and $X_t\subset X$ are in $\POL$, satisfying the condition $\dim(Z_t)=Z$ and $\dim(X_t)=\dim X$ for all $0<t<\epsilon$. Put $f_t=f|_{X_t}:X_t\to f(X_t)$. Lemma \ref{continuity}(b) and the pull-back diagrams for $Z_t$ and $f_t$, similar to (\ref{pull-back}), imply the convergence $\lim\limits_{t\to0}\Hom(Z_t,\ker^*(f_t))=\Hom(Z,\ker^*(f))$ in the ambient space $\Hom(\Aff(Z),\Aff(X))$. Then Lemma \ref{continuity}(c) implies
\begin{equation}\label{limit-equality}
\lim\limits_{t\to0}\Sigma\Hom(Z_t,f_t)^{\ev}=\Sigma\Hom(Z,f)^{ev}.
\end{equation}

Let $h:Z'\to Z$ be in $\CONV$ and $Z'=\lim\limits_{t\to0}Z_t'$ with $\dim(Z_t')=\dim Z'$ for all $t$. We can additionally assume $h(Z_t')\subset Z_t$ for all $t$. Denote $h_t:=h|_{Z_t}:Z'_t\to Z_t$. The definition of our functor in the polytopal case above ensures the compatibility $\Hom(h_t,f_t)^{\ev}=\Hom(h_s,f_t)^{\ev}$ on $\Hom(Z_t,f)^{\ev}\cap\Hom(Z_s,f)^{\ev}$. In particular, the limit equality (\ref{limit-equality}) gives rise to a functorial map $\Sigma\Hom(Z,f)^{ev}\to\Sigma\Hom(Z',f)^{\ev}$.

We are ready to state an affine substitute for the failed $\Sigma$-covariance.

\begin{theorem}\label{kaf=fiber}
If $f$ is in $\CONV$, then $\Sigma\Hom(-,f)^{ev}\cong\Hom(-,\Sigma f)$ in $\CONV^{\CONV^{\op}}$, or in $\POL^{\POL^{\op}}$ if $f$  is in $\POL$. 
\end{theorem}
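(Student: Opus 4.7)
The approach splits into a direct polytopal computation and an approximation argument extending it to $\CONV$.

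For the polytopal case ($f: X \to Y$ and $Z$ all in $\POL$), I would start from the pull-back square (\ref{pull-back}) and read off that $\bigl(\Hom(Z, f)^{\ev}\bigr)^{-1}(y) = \Hom\bigl(Z, f^{-1}(y)\bigr)$ for every $y \in f(X)$, since $g \in \Hom(Z, X)$ lies over $y$ iff $g(Z) \subset f^{-1}(y)$. Next I would apply Proposition~\ref{barycenter}(a) twice, with a single subdivision $\{\sigma_j\}_{j=1}^n$ of $f(X) = \Im\bigl(\Hom(Z, f)^{\ev}\bigr)$ (equality of images by Proposition~\ref{Functor}(e)) refined enough to be admissible for both $f$ and the evaluation map $\Hom(Z, f)^{\ev}$. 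Choosing barycenters $x_j \in \sigma_j$, this yields
$$\Sigma\Hom(Z, f)^{\ev} = \sum_{j=1}^n \frac{\vol(\sigma_j)}{\vol(f(X))}\,\Hom\bigl(Z, f^{-1}(x_j)\bigr) \quad\text{and}\quad \Sigma f = \sum_{j=1}^n \frac{\vol(\sigma_j)}{\vol(f(X))}\,f^{-1}(x_j).$$
Corollary~\ref{linear}(a), combined with $\Hom(Z, \lambda Y') = \lambda\Hom(Z, Y')$ for $\lambda>0$, rewrites $\Hom(Z, \Sigma f)$ as precisely the first right-hand side, giving $\Sigma\Hom(Z, f)^{\ev} = \Hom(Z, \Sigma f)$ literally inside the common ambient space $\Hom(\Aff(Z), \Aff(X))$. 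Naturality in $Z$ is then immediate: for any $h: Z' \to Z$, both $\Sigma\Hom(h, f)^{\ev}$ (as defined in the commutative diagram preceding the theorem) and $\Hom(h, \Sigma f)$ act by post-composition $(-)\circ h$ on the matching Minkowski summands $\Hom(Z, f^{-1}(x_j))$.

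For general $f$ and $Z$ in $\CONV$, I would approximate $X = \lim\limits_{t \to 0} X_t$ and $Z = \lim\limits_{t \to 0} Z_t$ by polytopes of full affine dimension, as already set up just before the theorem, set $f_t := f|_{X_t}$, and invoke the polytopal identity $\Sigma\Hom(Z_t, f_t)^{\ev} = \Hom(Z_t, \Sigma f_t)$ for each $t$. The left side converges to $\Sigma\Hom(Z, f)^{\ev}$ by (\ref{limit-equality}); the right side converges to $\Hom(Z, \Sigma f)$ by Lemma~\ref{continuity}(c) followed by Lemma~\ref{continuity}(b) (equivalently, Corollary~\ref{linear}(b)). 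Uniqueness of Hausdorff limits then forces $\Sigma\Hom(Z, f)^{\ev} = \Hom(Z, \Sigma f)$ in $\CONV$, and naturality passes to the limit because every structural map on either side is post-composition by an affine map, continuous in the Hausdorff metric on the ambient Hom-spaces.

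The main obstacle is not any one hard computation but ambient-space bookkeeping: both $\Sigma\Hom(-, f)^{\ev}$ (built via the limiting construction preceding the theorem) and $\Hom(-, \Sigma f)$ (built via Minkowski covariance through Corollary~\ref{linear}) must be identified inside a \emph{fixed} vector space in which Hausdorff convergence makes sense. Performing the polytopal identification literally inside $\Hom(\Aff(Z), \Aff(X))$ from the outset — rather than merely up to isomorphism — is what lets continuity close the argument uniformly in $\CONV$.
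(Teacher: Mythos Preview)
Your proposal is correct and follows essentially the same route as the paper: identify the fibers of $\Hom(Z,f)^{\ev}$ with $\Hom(Z,f^{-1}(y))$, apply Proposition~\ref{barycenter}(a) with a common subdivision, invoke Minkowski covariance (Corollary~\ref{linear}) to pull the sum inside the Hom, and then pass to $\CONV$ by the polytopal approximation already set up before the theorem. Your treatment is in fact slightly more careful than the paper's in two respects --- you make the scalar identity $\Hom(Z,\lambda Y')=\lambda\Hom(Z,Y')$ explicit alongside Corollary~\ref{linear}(a), and you spell out naturality and the ambient-space bookkeeping that the paper leaves implicit.
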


\begin{proof} 
First we consider the case, when $Z$ and $f:X\to Y$ are in $\POL$. By Proposition \ref{Functor}(e), $\Im(\Hom(Z,f)^{\ev})=f(X)$. Let $\sigma_1,\ldots,\sigma_n\subset Y$ be the maximal cells of a subdivision of $f(X)$, which subdivides the images of faces of $\Hom(Z,\ker^*(f))$ as well as the images of faces of $X$. We have
{\small
\begin{align*}
&\Sigma\Hom(Z,f)^{\ev}=\sum_{j=1}^m\frac{\vol(\sigma_j)}{\vol(f(X))}\cdot \Hom(Z,f^{\ev})^{-1}(x_j)=\\
&\sum_{j=1}^m\frac{\vol(\sigma_j)}{\vol(f(X))}\cdot \Hom(Z,f^{-1}(x_j))\cong
\Hom\left(Z,\sum_{j=1}^m\frac{\vol(\sigma_j)}{\vol(f(X))}\cdot f^{-1}(x_j)\right)=\Hom(Z,\Sigma f),
\end{align*}
}

\noindent where the first and last equalities follow from Proposition \ref{barycenter}(a) (notation as in that proposition) and the middle isomorphism is provided by Corollary \ref{linear}(a).

The general case, when $Z$ and $f$ are in $\CONV$, can be derived from the polytopal case along the lines the functor $\Sigma\Hom(-,f)^{\ev}\in\CONV^{\CONV^{\op}}$ was constructed in two steps, first considering the polytopal case.
\end{proof}

\section{Affine-compact Cokernel}\label{Cokernel} 

The following definition is modeled after the cokernel isomorphisms in $\VECT$, mentioned in Section \ref{Yoneda_embedding}.

\begin{definition}\label{two-cokernels}
For a map $f:X\to Y$ in $\CONV$, we have the \emph{object}
$$
\coker(f)=\lim\limits_{\longrightarrow}
\big(\xymatrix{X\ar@/^/[r]^f \ar@/_/[r]_{\text{const.}}&Y
}\big)\in\CONV,
$$
where the lower arrow in the diagram is a constant map with the value a point in $f(X)$, and the \emph{functor} $\coker^*(f)\in\SETS^{\CONV^{op}}$, defined by
\begin{align*}
&\coker^*(f)(Z)=\Hom(Z,Y)/(g_1\sim g_2\ \text{iff}\ \rho g_1=\rho g_2\ \text{for any}\ \rho:Y\to Y'\\ 
&\ \ \qquad\qquad\qquad\qquad\qquad\qquad\qquad\qquad\text{in}\ \CONV\ \text{with}\ (\rho f)(X)\ \text{a singleton}),\\
&\coker^*(f)(h):[g]\mapsto [gh]\quad\text{for}\ h:Z'\to Z\ \text{and}\ g:Z\to Y\ \text{in}\ \CONV.
\end{align*}
\end{definition}

First, we observe that $\coker(f)$ is independent of the target of the constant map, evaluating in $f(X)$, and it can be identified with $\pi(Y)$, where $\pi:\Aff(Y)\to\Aff(Y)$ is an affine map with $\pi^{-1}(\pi(f(x)))=\Aff(f(X))$ for any $x\in X$; i.e., $\pi$ is a linear projection of $Y$ along $\Aff(X)$. The covariant representable functor $\Hom(\coker(f),-)\in\CONV^{\CONV}$ identifies as follows:
\begin{align*}
&\Hom(\coker(f),Z)=\{g:Y\to Z\ |\ g\ \text{affine and }\ g(f(X))\ \text{a singleton}\},\\
&\Hom(\coker(f),h)(g)=hg\quad\text{for}\ g:Y\to Z\ \text{and}\ h:Z\to Z'\ \text{in}\ \CONV.
\end{align*}

As for the functor $\coker^*(f)$, it can be put in a more general framework. Observe that any map $\pi:S\to T$ in $\CONV$ gives rise to the functor:
\begin{align*}
&\Im\big(\Hom(-,\pi)\big)\in\CONV^{\CONV^{\op}},\\
&\Im\big(\Hom(Z,\pi)\big)=\Im\big(
\xymatrix{\Hom(Z,S)\ar[rr]^{\Hom(Z,\pi)}&&\Hom(Z,T)}
\big),\\
&\Im(\Hom(h,\pi))(g)=gh\quad\text{for}\ h:Z'\to Z\ \text{in}\ \CONV\ \text{and}\ g\in\Im(\Hom(Z,\pi)).
\end{align*}
For $f$ as in Definition \ref{two-cokernels}, let $\pi:Y\to\coker(f)$ be the canonical map. Then, because $[g_1]=[g_2]$ in $\coker^*(f)(Z)$ if and only if $\pi g_1=\pi g_2$, inside $\CONV^{\CONV^{\op}}$ we have:
$$
\coker^*(f)\cong\Im\big(\Hom(-,\pi)\big).
$$

Using barycentric coordinates and the alternative description of $\coker^*(f)$ above, one easily derives
\begin{lemma}
The functor
$\coker^*(f)$ is in $\CONV^{\CONV^{\op}}$, or in $\POL^{\POL^{\op}}$ if $f$ is in $\POL$.
\end{lemma}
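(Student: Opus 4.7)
The plan is to leverage the alternative description $\coker^*(f)\cong\Im\bigl(\Hom(-,\pi)\bigr)$ derived just before the statement, where $\pi:Y\to\coker(f)$ is the canonical affine projection. Under this identification, $\coker^*(f)(Z)$ becomes the image of the map $\Hom(Z,\pi):\Hom(Z,Y)\to\Hom(Z,\coker(f))$, $g\mapsto\pi g$, and the induced morphism $\coker^*(f)(h):\coker^*(f)(Z)\to\coker^*(f)(Z')$ associated with $h:Z'\to Z$ becomes the restriction of the precomposition map $\Hom(h,\coker(f)):\bar g\mapsto \bar g h$ to this image. Once this dictionary is in place, the lemma reduces to two elementary observations, one about the object level and one about the morphism level.

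First I would verify that $\coker^*(f)(Z)\in\CONV$, and $\in\POL$ when $f$ is in $\POL$ (and $Z\in\POL$). Since $\CONV$ and $\POL$ are self-enriched, $\Hom(Z,Y)$ and $\Hom(Z,\coker(f))$ are objects of the ambient category. Post-composition with the affine map $\pi$ defines an affine (hence continuous) map between these hom-objects, so its image is compact convex; in the polytopal case, an affine image of a polytope is a polytope, giving $\coker^*(f)(Z)\in\POL$. This handles condition that the functor evaluates in the right category.

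Next I would handle the morphism assignment: for $h:Z'\to Z$, the map $[g]\mapsto [gh]$ corresponds, via the identification above, to $\pi g\mapsto \pi g h$. This is manifestly the restriction of the affine map $\Hom(h,\coker(f))$ to the subset $\coker^*(f)(Z)\subset\Hom(Z,\coker(f))$, landing inside $\coker^*(f)(Z')\subset\Hom(Z',\coker(f))$. Restriction of an affine map to a convex subset (with image in a prescribed convex subset) is still affine, giving the required morphism in $\CONV$, or $\POL$ in the polytopal case. Functoriality follows from the functoriality of $\Hom(-,\coker(f))$.

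There is essentially no obstacle here beyond bookkeeping: the conceptual work was already done in establishing the identification $\coker^*(f)\cong\Im\bigl(\Hom(-,\pi)\bigr)$ and in the general fact that $\CONV$ and $\POL$ are self-enriched. The only mild subtlety is checking well-definedness of the restriction (that $\pi g_1=\pi g_2$ implies $\pi g_1 h=\pi g_2 h$), which is immediate from associativity of composition.
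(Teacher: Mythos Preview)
Your proposal is correct and follows essentially the same approach as the paper. The paper's own proof is a one-line remark invoking ``barycentric coordinates and the alternative description of $\coker^*(f)$ above,'' and you have simply spelled out what that remark means: use $\coker^*(f)\cong\Im\bigl(\Hom(-,\pi)\bigr)$, observe that affine images of compact convex sets (resp.\ polytopes) are compact convex (resp.\ polytopes), and that precomposition is affine.
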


The next proposition clarifies the relationship between the two cokernels. 

\begin{proposition}\label{coker-relationship}
Let $f:X\to Y$ be in $\CONV$ and $\pi:Y\to\coker(f)$ be the canonical map.
The functor $\coker^*(f)$ is representable if and only if $\pi$ has an affine section, in which case $\coker^*(f)=\Hom(-,\coker(f))$.
\end{proposition}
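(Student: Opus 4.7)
The plan is to exploit the identification $\coker^*(f)\cong\Im(\Hom(-,\pi))$ noted just before the statement, which already displays $\coker^*(f)$ as a subfunctor of $\Hom(-,\coker(f))$. Explicitly, I would work with two natural transformations: the tautological inclusion $\tau:\coker^*(f)\hookrightarrow\Hom(-,\coker(f))$ and the componentwise surjection $\rho=\Hom(-,\pi):\Hom(-,Y)\twoheadrightarrow\coker^*(f)$, $g\mapsto\pi g$, tied together by the relation $\tau\circ\rho=\Hom(-,\pi)$ as natural transformations $\Hom(-,Y)\to\Hom(-,\coker(f))$.

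For the ``if'' direction the setup does most of the work. Suppose $\pi$ admits an affine section $s:\coker(f)\to Y$. Then any $h\in\Hom(Z,\coker(f))$ factors as $h=\pi(sh)$, so $h\in\Im(\Hom(Z,\pi))=\coker^*(f)(Z)$ and $\tau_Z$ is onto. Hence $\coker^*(f)=\Hom(-,\coker(f))$ as subfunctors, which is evidently representable.

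The converse is the actual content. Assuming $\coker^*(f)\cong\Hom(-,C)$ for some $C\in\CONV$, both $\tau$ and $\rho$ become natural transformations between representable functors, so by the Yoneda embedding they are of the form ``post-compose with an affine map'': they come from uniquely determined $\phi:C\to\coker(f)$ and $\psi:Y\to C$, and the relation $\tau\circ\rho=\Hom(-,\pi)$ translates into $\phi\psi=\pi$. Taking $Z$ to be a singleton, the injectivity of $\tau_Z$ forces $\phi$ to be injective as a set map; the surjectivity of $\pi$, which holds because $\coker(f)=\pi(Y)$, together with $\phi\psi=\pi$, then forces $\phi$ to be surjective. A bijective affine map between convex compact sets automatically has an affine inverse, so $\phi$ is an isomorphism in $\CONV$. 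Consequently $\coker^*(f)=\Hom(-,\coker(f))$, and evaluating this equality at $Z=\coker(f)$ gives $\mathbf{1}_{\coker(f)}\in\coker^*(f)(\coker(f))=\Im(\Hom(\coker(f),\pi))$, which is exactly the existence of an affine section of $\pi$. The polytopal case runs identically, since Yoneda and the ``bijective affine $\Rightarrow$ affine inverse'' statement apply in $\POL$ as well.

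The main obstacle lies in the converse direction, since a priori the representing object $C$ has no connection to $\coker(f)$. It is the combination of the Yoneda embedding with the surjectivity of $\pi$ that pins $C$ down to $\coker(f)$ canonically via $\phi$. No approximation or deformation arguments (as in Section \ref{Sum-covariance}) are needed, because once $\coker^*(f)$ is displayed as a subfunctor of a hom-functor, the rest is formal category theory.
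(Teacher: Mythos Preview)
Your argument is correct and follows essentially the same route as the paper's proof: both rely on the identification $\coker^*(f)\cong\Im(\Hom(-,\pi))$, the Yoneda embedding, and evaluation at a singleton to pin down the representing object as $\coker(f)=\pi(Y)$, and then extract the section by lifting the identity of $\coker(f)$ through $\pi$. The only cosmetic difference is that you first factor $\pi=\phi\psi$ and show $\phi$ is an isomorphism (hence the inclusion $\tau$ is an equality) before lifting $\mathbf 1_{\coker(f)}$, whereas the paper works directly with the abstract isomorphism, lifts the resulting automorphism $\tau$ of $\pi(Y)$, and then postcomposes with $\tau^{-1}$; the content is the same.
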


\begin{proof} If $\pi$ has a section $\sigma:\pi(Y)\to Y$, then $\Hom(-,\sigma)$ is a right inverse of $\Hom(-,\pi)$. In particular, $\Hom(Z,\pi)$ is surjective for every $Z$ or, equivalently, $\coker^*(f)\cong\Hom(-,\pi(Y))$.

Conversely, if $\coker^*(f)$ is representable, then, by applying it to a singleton, we get $\Hom(-,\pi(Y))\cong\coker^*(f)$ and, in particular, an automorphism $\tau:\pi(Y)=\Hom(\star,\pi(Y))\to\coker^*(f)(\star)=\pi(Y)$. The isomorphism $\Hom(\pi(Y),\pi(Y))\cong\coker^*(f)(\pi(Y))$ maps $1_{\pi(Y)}$ to $\tau$. Thus, $\tau$ lifts to $Y$. But then $1_{\pi(Y)}=\tau^{-1}\tau$ also lifts to $Y$ or, equivalently, $\pi$ has an affine section.  
\end{proof}

\begin{remark}\label{second-failure}
The relationship between the cokernel and the fiber constructions is not straightforward: usually $\Hom(\coker(f),Z)\not\cong\Sigma\Hom(f,Z)$. For example, if $X=Y$ and $f={\bf1}_X$, then $\Hom(\coker(f),Z)\cong Z$ and $\Sigma\Hom(f,Z)$ is a point.
\end{remark}

\section{Sandwiching and complementing}\label{Difference} 

\subsection{Sandwiching} Let $U,V,W$ be vector spaces, with $U\subset V$. We want to describe $\Hom(W,V/U)$ without referring to quotient linear structures. A possible solution is provided by the subset $W_{U,V}=\{f\in\Hom(W,V)\ |\ U\subset f(W)\}$. In fact, the composite map 
$W_{U,V}\hookrightarrow\Hom(W,V)\to\Hom(W,V/U)$ is surjective and $W_{U,V}$ is the smallest of such choices inside $\Hom(W,V)$, retaining some linear structure: it is invariant under scaling by non-zero real numbers. Informally, $W_{U,V}$ corresponds to making $U$ a necessary target for linear maps $W\to V$, like $0$ is the necessary target for any linear map. 

In the polytopal setting, a similar object is studied in \cite{Sandwich}. Namely, for any two $d$-polytopes $Q\subset R$, the \emph{space of sandwiched simplices} is the subspace
$$
\Delta_{Q,P}=\{\Delta\ |\ Q\subset\Delta\subset P,\ \Delta\ \text{a $d$-simplex}\}\subset\RR^{d(d+1)},
$$
where the embedding into the Euclidean space results from a particular enumeration of the vertices of $\Delta$.  This is a complicated semialgebraic set and \cite{Sandwich} employs Morse theory to analyze it.

We can extend the construction to $\CONV$ as follows. Let $X,Y,Z$ be in $\CONV$, with $Y\subset X$. Consider the subset
$$
Z_{Y,X}=\{g:Z\to X\ |\ Y\subset g(Z)\}\subset\Hom(Z,X).
$$
The set $Z_{Y,X}$ is not a functorial construction in the following sense: for two inclusions $Y\subset X$, $Y'\subset X'$ and two maps $\psi:Z'\to Z$, $\theta:X\to X'$ in $\CONV$, such that $\theta(Y)\subset Y'$, the assignment $g\mapsto\theta g\psi$ does not always define a map $Z'_{Y,X}\to Z_{X',Y'}$. The reason is that the image of $\theta g\psi$ may easily fail to contain $Y'$, even if $\psi$ is the identity map ($\theta(Z')$ can be too small) or $\theta$ is the identity map ($Y'$ can be too large).

In order to make the $Z_{Y,X}$ into a functorial construction, we invoke categories of factorizations in the sense of \cite{Nostalgia}. For a category $\cC$, the objects of its \emph{category of factorizations} $\F\cC$ are morphisms in $\cC$, and a morphism from $f':a'\to b'$ to $f:a\to b$ is a commutative square in $\cC$ 
\begin{align*}
\xymatrix{
a\ar[r]^f&b\ar[d]^\theta\\
a'\ar[r]_{f'}\ar[u]^\phi&b'\\
}
\end{align*}
The composition in $\F\cC$ is by concatenating two squares and taking the composition along the vertical edges. This is different from the \emph{category of arrows}  \cite[Ch.2]{Categories}, where the morphisms are commutative squares with similar horizontal arrows but whose vertical arrows are oriented upward. The category of arrows for $\CONV$ contains the category of pairs $Y\subset X$, where the morphisms are affine maps $X\to X'$, mapping $Y$ to $Y'$. But we have already observed that $Z_{Y,X}$ is not functorial with respect to such maps of pairs. 

One more notation: $\CONV_{\surj}$ is the subcategory of $\CONV$ with the same objects and surjective affine maps.

We introduce the following \emph{sandwiching} functor

\begin{align*}
&\circledS:\CONV_{\surj}^{\op}\times\FCONV^{\op}\to\COMP,\\
&\circledS(Z,f)=\{g\in\Hom(Z,X)\ |\ f(Y)\subset g(Z)\},\\
&\circledS
\begin{pmatrix}
\rho,&
\xymatrix{
Y\ar[r]^f&\ar[d]^\theta X\\
Y'\ar[r]_{f'}\ar[u]^\phi&X'
}
\end{pmatrix}
(g)=\theta g\rho\quad\text{for}\quad\rho:Z'\to Z\ \text{in}\ \CONV_{\surj}.
\end{align*}

Observe that the functor $\circledS$ is (i) affine-compact covariant in $X$, (ii) affine-compact contravariant in $Y$, and (iii) affine-compact contravariant in $Z$ with respect to surjective maps.

\begin{proposition}\label{about-sanwishing}
Assume $X,Y,Z$ and $f:Y\to X$ are in $\POL$. If $\dim(\Im f)=\dim X$, then $\circledS(Z,f)$ is a semialgebraic set.
\end{proposition}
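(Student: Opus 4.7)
The plan is to represent $\circledS(Z,f)$ as the intersection, inside the polytope $\Hom(Z,X)$, of finitely many projections of algebraic incidence loci, and then to invoke the Tarski--Seidenberg theorem. The ambient space $\Hom(Z,X)$ is itself a polytope by the discussion in Section~\ref{preliminaries}, sitting inside the finite-dimensional real vector space of affine maps $\Aff(Z)\to\Aff(X)$, so it is polyhedral and \emph{a fortiori} semialgebraic; this is the ambient space in which I will work.

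Since $f(Y)\in\POL$, it equals the convex hull of its finite vertex set $\vertex(f(Y))=\{v_1,\ldots,v_k\}$. For any convex set $C$ one has $f(Y)\subset C$ iff each $v_j\in C$, and applying this to the convex set $C=g(Z)$ gives
$$
\circledS(Z,f)\;=\;\bigcap_{j=1}^{k} S_{v_j},\qquad S_v:=\{g\in\Hom(Z,X)\mid v\in g(Z)\}.
$$
The hypothesis $\dim(\Im f)=\dim X$ guarantees that this vertex set spans $\Aff(X)$, so the containment is encoded nondegenerately by these pointwise conditions. As semialgebraicity is preserved by finite intersection, it suffices to show that each $S_v$ is semialgebraic.

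Fix coordinates in $\Aff(Z)$ and $\Aff(X)$ and consider the incidence locus
$$
I_v\;=\;\bigl\{(g,z)\in\Hom(Z,X)\times Z\mid g(z)=v\bigr\}.
$$
Writing $g$ as a matrix-plus-translation pair $(A,b)$, the equation $Az+b=v$ is polynomial (indeed bilinear) in the coordinates of $(A,b,z)$, so $I_v$ is the intersection of an algebraic subset with the product of two polytopes, hence closed semialgebraic. The projection $\pi_1\colon\Hom(Z,X)\times Z\to\Hom(Z,X)$ is linear, so by the Tarski--Seidenberg theorem $S_v=\pi_1(I_v)$ is semialgebraic, and therefore so is the finite intersection $\circledS(Z,f)=\bigcap_{j=1}^k S_{v_j}$.

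The main (mild) obstacle is precisely the bilinear dependence of $g(z)$ on the pair $(g,z)$: one cannot describe $S_v$ polyhedrally by Fourier--Motzkin elimination, and genuine quantifier elimination is needed to handle the existential quantifier ``$\exists z\in Z$'' implicit in $v\in g(Z)$. Everything else reduces to choosing coordinates once and for all and checking that the defining equations are polynomial, both of which are routine.
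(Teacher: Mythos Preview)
Your argument is correct, but the route differs from the paper's. The paper encodes $f(Y)\subset g(Z)$ facet-wise: with $d=\dim X$, it records for each facet $F$ of $Z$ whether $g(F)$ is $(d-1)$-dimensional, and in that case imposes that every vertex of $f(Y)$ lie on the non-negative side of the affine hyperplane $\Aff(g(F))$. Both the half-space conditions and the ``$g(F)$ has full codimension-one image'' conditions are determinantal in the entries of $g$, and one takes a union over the $2^{\#\FF(Z)}$ possible families of full-image facets. You instead reduce to the vertex conditions $v_j\in g(Z)$, form the bilinear incidence locus $I_{v_j}$, and project via Tarski--Seidenberg---exactly the device the paper itself uses later for $\copyright(Z,f)$ in Proposition~\ref{about-complementing2}.

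What each approach buys: the paper's argument is constructive, producing explicit (if unwieldy) Boolean combinations of determinantal inequalities; yours is cleaner and avoids the case split over facet families. More notably, your argument never actually uses the hypothesis $\dim(\Im f)=\dim X$: the equivalence $f(Y)\subset g(Z)\Longleftrightarrow\{v_1,\ldots,v_k\}\subset g(Z)$ holds for any convex $g(Z)$, and Tarski--Seidenberg does the rest regardless of dimensions. Your sentence invoking the hypothesis to say the containment is ``encoded nondegenerately'' is harmless but unnecessary. So you have in fact proved the sharper statement the paper only alludes to in the remark following its proof (``using a more elaborate argument, one can actually drop the condition $\dim(\Im f)=\dim X$'').
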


\begin{proof}
Let $d=\dim X$. The condition $\circledS(Z,f)\not=\emptyset$ implies that $\dim Z\ge d$. For $g\in\circledS(Z,f)$, the inclusion $f(Y)\subset g(Z)$ is equivalent to the condition that the $(d-1)$-dimensional images of facets of $Z$ have the vertices of $f(Y)$ on their non-negative sides. This leads to several systems of determinantal inequalities in the coefficients of the (non-homogeneous) linear forms defining $g$. The systems depend on the families of facets of $Z$, which have  $(d-1)$-dimensional images under $g$. One considers all possible $2^{\#\FF(Z)}$ families, some geometrically not feasable, i.e., having the empty solution set. The condition that a particular facet of $Z$ has a $(d-1)$-dimensional image under $g$ also expresses as (a pair of) determinantal inequalities in the entries of $g$.
\end{proof}

Below, in Remark \ref{nonlinearity}, we give an example of $\circledS(Z,f)$, which can not be described by a Boolean combination of linear inequalities. We also remark that, using a more elaborate argument, one can actually drop the condition  $\dim(\Im f)=\dim X$ in the proposition above.

\subsection{Complementing} There is an associated and, in a sense, complementary assignment, more amenable to topological control.

Let $\F\CONV_{\inj}$ denote the category with the same objects as $\F\CONV$, where the morphisms are the commutative squares
$$
\xymatrix{
Y\ar[r]^f&\ar[d]^\theta X\\
Y'\ar[r]_{f'}\ar[u]^\phi&X'
}
$$
with $\theta$ injective.

We introduce the following \emph{complementing} functor:
\begin{align*}
&\copyright:\CONV^{\op}\times\F\CONV^{\op}_{\inj}\to\COMP,\\
&\copyright(Z,f)=\overline{\{g\in\Hom(Z,X)\ |\ g(Z)\cap\Im f=\emptyset\}},\\
&\copyright
\begin{pmatrix}
\rho,&
\xymatrix{
Y\ar[r]^f&\ar[d]^\theta X\\
Y'\ar[r]_{f'}\ar[u]^\phi&X'
}
\end{pmatrix}
(g)=\theta g\rho.
\end{align*}
The overline in the definition of $\copyright(Z,f)$ refers to the closure in the Euclidean topology. We have to take the closure to get an affine compact functor. In fact, $\copyright$ is (i) affine-compact contravariant in $Z$, (ii) affine-compact contravariant in $Y$, and (iii) affine-compact covariant in $X$ with respect to injective maps.

The link to cokernel objects is that the functor $\copyright$ makes $\inte(f(Y))$ an impossible target when $\dim(f(Y))=\dim X$, a fusion of the topological quotient and affine maps: for topological spaces $Y\subset X$, the space of continuous maps $Z\to X\setminus Y$ in open-compact topology is homeomorphic to that of the continuous maps $Z\to\big((X/Y)\setminus\{\star\}\big)$, where $\star\in X/Y$ is the point corresponding to the subspace $Y\subset X$. 

For a map $f:Y\to X$ in $\CONV$, let $\mathcal P(X\setminus f)$ be the set of convex compact subsets $X'\subset X$, admitting affine functions $\alpha:X'\to\RR$ for which $\alpha(X')\subset\RR_{\ge0}$ and $\alpha(\Im f)\subset\RR_{\le0}$. The inclusion order on $\mathcal P(X\setminus f)$ makes it into a poset and, therefore, a subcategory of $\CONV$. 

Denote by $\Hom(-,\mathcal P(X\setminus f))$ the image of the embedding
$\mathcal P(X\setminus f)\to\CONV^{\CONV^{\op}}$,
which is the restriction of the Yoneda Embedding $\CONV\to\CONV^{\CONV^{\op}}$; see Section \ref{preliminaries}. In particular, we have a natural transformation 
$$
\Hom(-,X')\xymatrix{\ar[r]^{\bullet}&}\Hom(-,X'')\\ 
$$
whenever $X'\subset X''$ in $\mathcal P(X\setminus f)$.

\begin{proposition}\label{about-complementing1} Let $X,Y,Z$ and $f:Y\to X$ be in $\CONV$.
\begin{enumerate}[\rm(a)]
\item The following equality holds in $\COMP^{\CONV^{\op}}$:
$$
\copyright(-,f)=\lim\limits_{\longrightarrow}\Hom(-,\mathcal P(X\setminus f));
$$
\item 
We have 
$$
\quad \ \ \copyright(Z,f)=
\begin{cases}
\overline{{\{g\in\Hom(Z,X)\ |\ g(Z)\cap\inte(\Im f)=\emptyset\}}},\\
\{g\in\Hom(Z,X)\ |\ g(Z)\cap\inte(\Im f)=\emptyset\}\ \text{if}\ \dim(\Im f)=\dim X;
\end{cases}
$$
\item $\copyright(Z,f)$ is not a convex set unless $\dim(\Im f)+\dim Z<\dim X$, in which case $\copyright(Z,f)=\Hom(Z,X)$;
\item If $\dim(\Im f)=\dim X$, then $\circledS(Z,f)\cap\copyright(Z,f)=\emptyset$;
\item $\copyright(Z,{\bf1}_X)=\Hom(Z,\partial X)$, where $\partial X$ and $\Hom(Z,\partial X)$ are viewed as polytopal complexes in the sense of  \cite{Hom-complexes};
\end{enumerate}
\end{proposition}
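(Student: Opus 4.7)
I would prove the parts in the order (b), (a), (d), (e), (c), since (b) provides a tractable description of $\copyright(Z,f)$ from which (d) and (e) follow as direct corollaries. For the first formula of (b), the inclusion $\copyright(Z,f)\subseteq\overline{\{g:g(Z)\cap\inte(\Im f)=\emptyset\}}$ is immediate, as the original defining set sits inside the larger set on the right. For the reverse, given $g$ with $g(Z)\cap\inte(\Im f)=\emptyset$, the disjoint convex sets $g(Z)$ (compact) and $\inte(\Im f)$ admit a weakly separating affine functional $\alpha$ on $\Aff(X)$ with $\alpha\ge0$ on $g(Z)$ and $\alpha\le0$ on $\Im f$; shrinking $g$ slightly toward an interior point of $X$ and translating in a direction of strict $\alpha$-increase produces a sequence of maps with images in $\{\alpha>0\}$, hence disjoint from $\Im f$, converging to $g$ in $\Hom(Z,X)$. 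For the second formula in (b), when $\dim(\Im f)=\dim X$ the set $\inte(\Im f)$ is open in $X$, and since $g\mapsto g(Z)$ is Hausdorff-continuous, ``image disjoint from an open set'' is a closed condition on $g$, so no closure is needed.

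For (a), I would first identify $\copyright(Z,f)$ pointwise with $\bigcup_{X'\in\mathcal P(X\setminus f)}\Hom(Z,X')$. The inclusion $\supseteq$ uses the $\mathcal P$-separation plus (b): for $g:Z\to X'$ and $\alpha$ separating $X'$ from $\Im f$, the image $g(Z)\subseteq\{\alpha\ge0\}$ is approximated by translates with image in $\{\alpha>0\}$, disjoint from $\Im f$. The inclusion $\subseteq$: any $g$ in the defining set of (b) yields, via Hahn--Banach applied to $g(Z)$ and $\inte(\Im f)$, a separating functional making $X':=g(Z)\in\mathcal P(X\setminus f)$, through which $g$ factors; limit points are handled by a compactness argument on normalized separating functionals. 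The colimit universal property in $\COMP^{\CONV^{\op}}$ then follows by the Yoneda correspondence applied levelwise.

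Parts (d) and (e) are short consequences of (b). For (d): if $g\in\circledS(Z,f)$ then $\Im f\subseteq g(Z)$, hence $\inte(\Im f)\subseteq g(Z)$; the hypothesis $\dim(\Im f)=\dim X$ makes $\inte(\Im f)$ nonempty, so $g(Z)\cap\inte(\Im f)\ne\emptyset$, placing $g$ outside $\copyright(Z,f)$ by the second formula of (b). For (e): specializing (b) to $f=\mathbf{1}_X$ gives $\Im f=X$ and $\inte(\Im f)=\inte(X)$, so the second formula yields $\copyright(Z,\mathbf{1}_X)=\{g:g(Z)\subseteq\partial X\}=\Hom(Z,\partial X)$ in the polytopal complex sense of \cite{Hom-complexes}.

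For (c), the case $\dim(\Im f)+\dim Z<\dim X$ follows from a dimension count: a generic affine $g:Z\to X$ has $\dim g(Z)\le\dim Z$ and $\Im f$ has codimension $>\dim Z$ in $X$, so $g(Z)\cap\Im f=\emptyset$ for $g$ in a dense subset of $\Hom(Z,X)$, whose closure is all of $\Hom(Z,X)$. In the complementary case I would witness non-convexity by choosing $g_1,g_2\in\copyright(Z,f)$ whose images lie on opposite extreme parts of $X$ relative to $\Im f$, arranged so that the midpoint's image pierces $\inte(\Im f)$ transversally at a point whose first-order behavior under affine perturbations is to remain inside $\inte(\Im f)$. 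A prototypical instance is $X=[-1,1]^2$, $\Im f=[-1,1]\times\{0\}$, $Z=[0,1]$, with $g_1$ the segment along $x=1$ and $g_2$ along $x=-1$: their midpoint is $\{0\}\times[-1,1]$ which crosses $\inte(\Im f)$ at $(0,0)$, and every close affine map still crosses $\inte(\Im f)$. The main obstacle is making this construction work in the required generality, since a naive ``opposite sides'' choice may allow the midpoint to be perturbed off $\inte(\Im f)$; the supporting data for $g_1,g_2$ must be drawn from extreme faces of $X$ so that the midpoint's crossing of $\Im f$ is robust against all small affine perturbations.
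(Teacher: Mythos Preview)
For parts (a), (b), (d), and (e) your argument is correct and follows the same route as the paper, which dismisses each of these as ``straightforward''; you have simply supplied the details. Your separation--and--perturbation argument for the reverse inclusion in (b) is the natural way to unpack the paper's one-word proof, and your derivations of (d) and (e) from the second line of (b) match the paper's. For (a), both you and the paper rest on the identification $\copyright(Z,f)=\bigcup_{X'\in\mathcal P(X\setminus f)}\Hom(Z,X')$; your compactness argument on normalized separating functionals is the right device for pushing the $\subseteq$ direction through to limit points.

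For part (c), the easy direction agrees with the paper's. For the non-convexity direction your strategy---exhibit $g_1,g_2\in\copyright(Z,f)$ whose midpoint robustly meets $\inte(\Im f)$---is sound, and your prototype works. You are right to flag the general case as the main obstacle. The paper's argument here takes a different tack and is itself terse to the point of being incomplete: it produces a single $g$ with $\dim g(Z)+\dim(\Im f)=\dim X$ and $g(Z)\cap\inte(\Im f)\neq\emptyset$, then observes that every small perturbation of $g$ still meets $\inte(\Im f)$. By (b) this forces $g\notin\copyright(Z,f)$ (so the paper's stated ``$g\in\copyright(Z,f)$'' is a slip), and hence yields only $\copyright(Z,f)\subsetneq\Hom(Z,X)$, not non-convexity. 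In other words, the paper does not actually exhibit its $g$ as a convex combination of elements of $\copyright(Z,f)$. So the gap you identified is real, and the paper does not fill it either; what remains in both cases is to place such a transversal $g$ on a segment whose endpoints land in $\copyright(Z,f)$.
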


\begin{proof} (a) is straightforward from the observation that, for any $Z\in\CONV$, we have
$$
\copyright(Z,f)=\bigcup_{X'\in\mathcal P(X\setminus f)}\Hom(Z,X')\qquad \big(\subset\Hom(Z,X)\big).
$$

\medskip\noindent(b) is straightforward.

\medskip\noindent(c) When $\dim(\Im f)+\dim Z<\dim X$, any $g\in\copyright(Z,f)$ admits arbitrary small perturbations $g'$ for which $g'(Z)\ \cap\ \inte(\Im f)=\emptyset$. On the other hand, if $\dim(\Im f)+\dim Z\ge\dim X$, there exists $g\in\copyright(Z,f)$ such that $\dim(g(Z))+\dim(\Im f)=\dim X$ and $g(Z)\cap\inte(\Im f)\not=\emptyset$. Then, for all sufficiently small perturbations $g'$ of $g$, we have $g'(Z)\cap\inte(\Im f)\not=\emptyset$.

\medskip\noindent(d) By the alternative description of $\copyright(Z,f)$ in part (b) for the case $\dim(\Im f)=\dim X$, every element $g\in\circledS(Z,f)\cap\copyright(Z,f)$ must satisfy the contradictory conditions $\Im f\subset g(Z)$ and $g(Z)\cap\inte(\Im f)=\emptyset$.

\medskip\noindent(e) is straightfoward.
\end{proof}

\begin{proposition}\label{about-complementing2}
Assume $X,Y,Z\in\POL$ and $f:Y\to X$ is in $\POL$. Then $\copyright(Z,f)$ is a semialgebraic set.
\end{proposition}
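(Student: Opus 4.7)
The natural approach is to invoke Tarski--Seidenberg quantifier elimination together with the standard closure properties of semialgebraic sets, starting from the definition
\[
\copyright(Z,f)=\overline{\{g\in\Hom(Z,X)\ |\ g(Z)\cap\Im f=\emptyset\}}.
\]
Since $\Hom(Z,X)\in\POL$ is already a polytope, and hence trivially semialgebraic, it suffices to show that the set inside the closure is semialgebraic and then invoke the fact that the Euclidean closure of a semialgebraic set is semialgebraic.

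Let $v_1,\ldots,v_k$ be the vertices of $Z$ and $w_1,\ldots,w_m$ the vertices of $\Im f$. A map $g\in\Hom(Z,X)$ is parametrized by the tuple $(g(v_1),\ldots,g(v_k))\in X^k$ satisfying the linear compatibility constraints that express the requirement that the induced affine map lands inside $X$ (trivially satisfied when $Z$ is a simplex). In these coordinates, $g(Z)=\conv(g(v_1),\ldots,g(v_k))$ while $\Im f=\conv(w_1,\ldots,w_m)$ has fixed vertices.

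In this parametrization, the auxiliary set $\{g\in\Hom(Z,X):g(Z)\cap\Im f\ne\emptyset\}$ is precisely the projection onto the $g(v_i)$-coordinates of the locus in the joint $(g(v_i),t_i,s_j)$-space defined by the bilinear equation $\sum_i t_i\,g(v_i)=\sum_j s_j\,w_j$ together with the linear constraints $t_i,s_j\ge 0$ and $\sum_i t_i=\sum_j s_j=1$. That joint locus is manifestly semialgebraic, so by Tarski--Seidenberg its projection is semialgebraic, and so is its complement $\{g\in\Hom(Z,X):g(Z)\cap\Im f=\emptyset\}$. Taking the Euclidean closure preserves semialgebraicity, and thus $\copyright(Z,f)$ is semialgebraic.

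The main (modest) obstacle is precisely the appeal to Tarski--Seidenberg; every other step is formal. An alternative, closer in spirit to the proof of Proposition~\ref{about-sanwishing}, would be to invoke strict separation of disjoint compact convex sets and describe $\copyright(Z,f)$ as the projection of a family of linear inequalities coupling the coefficients of a separating affine function $\alpha$ with the $g(v_i)$ and the $w_j$; but this still projects out the free parameter $\alpha$ and so relies on the same ingredient in disguise.
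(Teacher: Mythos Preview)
Your argument is correct and follows essentially the same route as the paper. Both proofs write the condition $g(Z)\cap\Im f\neq\emptyset$ as a polynomial (in fact bilinear) condition on a larger parameter space and then invoke Tarski--Seidenberg for the projection, followed by the stability of semialgebraic sets under complements and Euclidean closure; the only cosmetic difference is that the paper packages the bilinear locus via the evaluation map $\ev:\Hom(Z,X)\times Z\to X$ and projects out $z\in Z$, whereas you unwind $Z$ and $\Im f$ in barycentric coordinates and project out the weights $(t_i,s_j)$.
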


\begin{proof}
Consider the evaluation map $\ev:\Hom(Z,X)\times Z\to X$. It is not affine, but \emph{bi-affine}, i.e., upon fixing one component, the map is affine in the other. Let $\pi:\Hom(Z,X)\times Z\to\Hom(Z,X)$ be the projection map. Then 
\begin{align*}
\copyright(Z,f)=\overline{\pi\big((\Hom(Z,X)\times Z)\setminus\ev^{-1}(\Im f)\big)}.
\end{align*}
Since $\ev$ is a degree 2 polynomial map, basic properties of semialgebraic sets (e.g.,  the \emph{Tarski-Seidenberg Theorem}  \cite{Real}) guarantee that $\copyright(Z,f)$ is semialgebraic.
\end{proof}

\begin{remark}\label{nonlinearity} In general,  for $Z$ and $f$ in $\POL$, neither of the semialgebraic sets $\circledS(Z,f)$ and $\copyright(Z,f)$ is described by a Boolean combination of linear inequalities. As an example, consider the two squares in the plane $\RR^2$:
\begin{align*}
&\Box'=\conv\{(a_1,a_2)\ |\ a_1,a_2=\pm\epsilon'\},\\
&\Box''=\conv\{(b_1,b_2)\ |\ b_1,b_2=\pm\epsilon''\},
\end{align*}
where $0<\epsilon'\ll\epsilon''$, i.e., $\epsilon''$ is sufficiently larger than $\epsilon'$. Let $\Delta$ be a triangle. Then neither $\circledS(\Delta,\iota)$ nor $\copyright(\Delta,\iota)$, where $\iota:\Box'\hookrightarrow\Box''$ is the inclusion map, can be described by linear constrains. Without delving into the planar geometry, we only mention that this follows from the strict convexity of the following function, wherever it is defined: 
\begin{align*}
&\text{$x$-coordinate of a point $p$ in the upper edge of $\Box''$}\ \mapsto\\
&\qquad\text{$y$-coordinate of the point $q$ on the right edge of $\Box''$,}\\
&\qquad\qquad\text{such that the upper right corner of $\Box'$ is in $[p,q]$}.
\end{align*}
\end{remark}

\begin{proposition}\label{about-complementing3}
Let $X,Y,Z$ and $f:Y\to X$ be in $\CONV$. If $\dim(\Im f)=\dim X$, then $\partial X$ is a strong deformation retract of $\copyright(Z,f)$. In all other cases, $\copyright(Z,f)$ is contractible.
\end{proposition}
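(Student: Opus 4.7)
The plan is to construct in each case an explicit two-stage homotopy centered at a fixed $z_0 \in Z$, whose first stage contracts every $g \in \copyright(Z,f)$ to the constant map with value $g(z_0)$ and whose second stage moves that constant to an appropriate target.

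For the case $\dim(\Im f) = \dim X$, Proposition \ref{about-complementing1}(b) allows us to drop the closure and write $\copyright(Z,f) = \{g \mid g(Z) \cap \inte(\Im f) = \emptyset\}$. Fix $p \in \inte(\Im f)$, which lies in $\inte(X)$ because $\Im f$ is full-dimensional, and let $\pi \colon X \setminus \{p\} \to \partial X$ be the radial projection from $p$; it varies continuously and fixes $\partial X$ pointwise. The proposed homotopy is $H(g,t)(z) = (1-2t)g(z) + 2t\,g(z_0)$ for $t \in [0,1/2]$, followed by the straight-line path of constant maps with value $(2-2t)g(z_0) + (2t-1)\pi(g(z_0))$ for $t \in [1/2,1]$. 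The first stage stays in $\copyright(Z,f)$ because $H(g,t)(Z)$ is a convex combination of points of $g(Z)$ with the member $g(z_0)$, hence lies in $g(Z)$ and avoids $\inte(\Im f)$. The second stage stays in $\copyright(Z,f)$ because, $\inte(\Im f)$ being convex with $p$ inside, the ray from $p$ through $g(z_0)$ exits $\inte(\Im f)$ exactly once and no later than reaching the non-interior point $g(z_0)$, so the entire segment $[g(z_0),\pi(g(z_0))]$ avoids $\inte(\Im f)$. For $g$ a constant with value $x \in \partial X$ one has $g(z_0) = x = \pi(x)$, so $H(g,t) = g$ for all $t$; this yields the strong deformation retract onto $\partial X$, identified with its constant maps in $\Hom(Z,X)$.

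For the case $\dim(\Im f) < \dim X$, I split into two sub-cases. If $\dim(\Im f) + \dim Z < \dim X$, Proposition \ref{about-complementing1}(c) gives $\copyright(Z,f) = \Hom(Z,X)$, a convex subset of the space of affine maps and hence contractible. Otherwise, $\inte(\Im f)$ is nowhere dense in $X$ (relatively open in a proper affine subspace of $\Aff(X)$), so $X \setminus \inte(\Im f)$ is dense in $X$, and approximation by constants shows that \emph{every} constant map $Z \to X$ lies in $\copyright(Z,f)$. I apply the same two-stage template but replace $\pi(g(z_0))$ by an arbitrary fixed $x_0 \in X$: the first stage is controlled exactly as before, and the second stage stays in $\copyright(Z,f)$ because it is a path of constant maps. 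Its endpoint is the constant with value $x_0$, contracting $\copyright(Z,f)$ to a point.

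The main obstacle I foresee is handling the closure in the definition of $\copyright(Z,f)$ in the lower-dimensional case: a general $g$ in this closure need not satisfy $g(Z) \cap \inte(\Im f) = \emptyset$ strictly, so the verification that each stage of the homotopy lands back in $\copyright(Z,f)$ must be done first for $g$ in the open set $\{g \mid g(Z) \cap \inte(\Im f) = \emptyset\}$ and then extended by passing to the closure, using the joint continuity of $H$ in $(g,t)$ together with the fact that $\copyright(Z,f)$ is closed by construction.
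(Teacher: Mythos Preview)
Your proof is correct and follows essentially the same two-stage strategy as the paper: first a homothety centered at $g(z_0)$ retracts $\copyright(Z,f)$ onto the subspace of constant maps, and then that subspace is further retracted (by radial projection onto $\partial X$ in the full-dimensional case, by contracting the convex set $X$ in the other). Your presentation is more explicit about the formulas and about passing to the closure, and the sub-case split in the lower-dimensional case is unnecessary but harmless.
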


\begin{proof}
Assume $\dim(\Im f)=\dim X$. We think of $X\setminus\inte(\Im f)$ as a subset of $\copyright(Z,f)$ via identifying every point $x\in X\setminus\inte(\Im f)$ with the constant map $g:Z\to X$, $g(Z)=x$. Pick a point $z\in Z$. The homotopy
\begin{align*}
H_t:\copyright(Z,f)&\to\copyright(Z,f),\quad t\in[0,1],\\
g&\mapsto
\big(\text{homothety of}\ \Aff(X)\ \text{with coefficient}\ t\ \text{and}\\
&\ \qquad\text{centered at}\ g(z)\big)\circ g
\end{align*}
makes $X\setminus\inte(\Im f)$ a strong deformation retract of $\copyright(Z,f)$. But $\partial X$ is a strong deformation retract of $X\setminus\inte(\Im f)$ via the polar projection onto the boundary from a point $y\in\inte(\Im f)$. Concatenating the two homotopies, we get the desired deformation retraction.

If $\dim(\Im f)<\dim X$, by identifying every point $x\in X$ with the constant map $g:Z\to X$, $g(Z)=x$, we can think of $X$ as a subset of $\copyright(Z,f)$. The homotopy $\{H_t\}_{[0,1]}$ above makes $X$ itself a deformation retract of $\copyright(Z,f)$, and $X$ is contractible.
\end{proof}

\subsection{Complementarity}

For a polytope $P$, let $\tilde\FF(P)$ denote the poset of all proper faces $F\subset P$, ordered by inclusion. We view $\tilde\FF(P)$ as a finite category. For a map $f:Y\to X$ in $\CONV$, we have the contravariant functor:
\begin{align*}
\copyright(\tilde\FF(P), f):\tilde\FF(P)&\to\COMP,
\\
F&\mapsto\copyright(F,f),\\
(\iota:F\hookrightarrow G)&\mapsto\ 
\bigg({\begin{matrix}
\copyright(G,f)&\to&\copyright(F,f)\\
g&\mapsto&g|_F
\end{matrix}}\bigg).
\end{align*}

\begin{lemma}\label{limit_exists}
The limit $\lim\limits_{\longleftarrow}\copyright(\tilde\FF(P),f)$ exists.
\end{lemma}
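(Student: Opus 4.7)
The plan is to construct $\lim\limits_{\longleftarrow}\copyright(\tilde\FF(P),f)$ by hand, inside an ambient finite-dimensional product, as a closed compact subset cut out by linear compatibility constraints, and then to verify the universal property directly. The whole argument is the standard finite-limit recipe, specialized to compact subsets of vector spaces with affine maps; nothing conceptually new is required beyond bookkeeping.

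First, since $P$ has finitely many proper faces, $\tilde\FF(P)$ is a finite category. Each value $\copyright(F,f)$ is a compact subset of the finite-dimensional vector space $W_F := \Hom(\Aff(F),\Aff(X))$. I would form the compact product $K = \prod_{F} \copyright(F,f)$ sitting inside $W := \bigoplus_{F} W_F$, and set
$$
L = \Big\{ (g_F)_F \in K \;\Big|\; g_G|_F = g_F \text{ for every inclusion } F \subset G \text{ in } \tilde\FF(P) \Big\}.
$$
Since each restriction map $W_G \to W_F$, $g \mapsto g|_F$, is linear, every compatibility condition is a linear equation in $W$, so $L$ is closed in the compact set $K$, hence compact; thus $L \in \COMP$.

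Next I would verify the universal property. The coordinate projections $\pi_F: L \to \copyright(F,f)$ are affine, being restrictions of linear coordinate maps $W \to W_F$, and they assemble into a cone over $\copyright(\tilde\FF(P),f)$ by the very defining condition of $L$. For any other cone $(h_F:T\to\copyright(F,f))_F$ in $\COMP$, the assembled map $T \to W$, $t \mapsto (h_F(t))_F$, is affine, lands in $K$ by construction, and lands in $L$ thanks to the cone condition on the $h_F$'s; uniqueness of this factorization is forced by the product structure.

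The only genuine subtlety is that $\COMP$ carries \emph{affine} morphisms rather than arbitrary continuous ones, so one must confirm that the ambient product and the equalizer-style cut remain within this morphism class. This is automatic since all structural maps involved (coordinate projections and restriction) are linear, and thus affine. Consequently there is no real obstacle; the lemma is essentially a certificate that $\COMP$ admits finite limits, constructed in the evident way, and the same construction would equally well cover arbitrary finite diagrams in $\COMP$.
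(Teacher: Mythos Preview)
Your proof is correct and, in fact, more economical than the paper's. Both arguments realize the limit as a compatible family $(g_F)_F$ inside the finite product $\prod_F \copyright(F,f)$; you cut it out by linear equations and conclude compactness immediately as a closed subset of a compact set, whereas the paper phrases the same object as the space of continuous face-wise affine maps $\gamma:\partial P\to X$ with $\gamma|_F\in\copyright(F,f)$, and then appeals to a general result of Stone on inverse limits (via closedness of the restriction maps) to get compactness. For a \emph{finite} diagram the Stone reference is overkill, and your direct argument suffices.

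One thing the paper's formulation buys, however, is the explicit identification of elements of the limit with face-wise affine maps $\partial P\to X$; this description is invoked verbatim in the proof of the next theorem (Theorem~\ref{comp-quotient}), where one needs to talk about extending such a $\gamma$ to an affine map $g:P\to X$. Your tuple description is of course equivalent, but if you adopt it you should note the translation so that the subsequent argument goes through without friction.
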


\begin{proof}
The limit exists in the bigger category of topological spaces and continuous maps: as a space, $\lim\limits_{\longleftarrow}\copyright(\tilde\FF(P),f)$, identifies with the space, with the open-compact topology, of continuous face-wise affine maps $\gamma:\partial P\to X$, satisfying $\gamma|_F\in\copyright(F,f)$ for every $F\in\tilde\FF(P)$. All one needs to show is that $\lim\limits_{\longleftarrow}\copyright(\tilde\FF(P),f)$ is compact. But this follows from \cite[Theorem 5]{Stone} because, for any two faces $F,G\in\tilde\FF(P)$ with $F\subset G$, the induced map $\copyright(G,f)\to\copyright(F,f)$, which is the restriction map, is closed -- an easy exercise. 
\end{proof}

Assume $f:Y\to X$ is a map in $\CONV$, such that $\dim(\Im f)=\dim X$. Then the subsets $\circledS(Z,f),\copyright(Z,f)\subset\Hom(Z,X)$ are disjoint (Proposition \ref{about-complementing1}). Correspondingly, we denote their union by $\circledS(P,f)\amalg\copyright(P,f)$.

Also, by Lemma \ref{limit_exists}, it makes sense to talk about maps in $\COMP$ to and from $\lim\limits_{\longleftarrow}\copyright(\tilde\FF(P),f)$. 

\begin{theorem}\label{comp-quotient}
Let $f:Y\to X$ in $\CONV$ and $P\in\POL$. Assume $\dim(\Im f)=\dim X\le\dim P$. Then there exists a natural injective affine map
\begin{align*}
\rho:\circledS(P,f)\amalg\copyright(P,f)\to
\lim\limits_{\longleftarrow}\copyright(\tilde\FF(P),f).
\end{align*}
Moreover,
\begin{enumerate}[\rm(a)]
\item
The map $\rho$ is bijective if $P$ is simple, not an $n$-gon with $n\ge4$;
\item 
The map $\rho$ is non-bijective if  $f(Y)\subset\inte(X)$ and there is a vertex $v\in P$, such that (i) the facets through $v$ are simplices, and (ii) $P$ is not a pyramid with apex at $v$.
\end{enumerate}
\end{theorem}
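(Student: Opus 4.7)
My plan is to construct $\rho$ as the boundary-restriction map $g\mapsto g|_{\partial P}$ and verify on each summand that it lands in the inverse limit. On $\copyright(P,f)$ this is immediate: for every proper face $F\subset P$ one has $g(F)\subset g(P)$ and $g(P)\cap\inte(\Im f)=\emptyset$, so $g|_F\in\copyright(F,f)$ by Proposition \ref{about-complementing1}(b), and the restrictions assemble into a compatible family in $\lim\limits_{\longleftarrow}\copyright(\tilde\FF(P),f)$. On $\circledS(P,f)$ the key geometric observation is that $\Im f\subset g(P)\subset X$ together with $\dim(\Im f)=\dim X$ forces $\dim g(P)=\dim X$, hence $\inte(\Im f)\subset\inte(g(P))$ in $\Aff(X)$; when $\dim P=\dim X$, $g$ is automatically an affine embedding, and it carries proper faces of $P$ into $\partial g(P)$, which is disjoint from $\inte(\Im f)$, so $g|_F\in\copyright(F,f)$ as required. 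The main technical obstacle lies in the case $\dim X<\dim P$, where a proper face $F$ may satisfy $\dim F\ge\dim X$ and $g(F)$ can cover $g(P)$ entirely; this calls for a careful analysis of the combinatorial interaction between $\partial P$ and the preimage $g^{-1}(\Im f)\subset P$, likely by rephrasing $\rho(g)$ in terms of the induced subdivision of $\partial P$ by the faces of $g^{-1}(\Im f)$.

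Naturality and affineness of $\rho$ are routine: restriction is a linear pullback along face inclusions, natural with respect to $\CONV_\surj$-morphisms in $P$ and $\F\CONV$-morphisms in $f$, and the inverse limit inherits its affine structure from the product. Injectivity is immediate, since any affine map $g:P\to X$ is determined by its values on $\vertex(P)\subset\partial P$, so $\rho$ separates points.

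For part (a), I would construct an inverse to $\rho$. Given $\gamma$ in the inverse limit, I extract the vertex data $(\gamma(v))_{v\in\vertex(P)}$ and form the affine extension $g:P\to X$; simplicity of $P$ ensures that the local combinatorics at each vertex is that of a simplex, so the extension exists and is uniquely determined, and the dichotomy ``$g\in\circledS(P,f)$ or $g\in\copyright(P,f)$'' is then forced by the position of $\Im f$ relative to $g(P)$. The exclusion of $n$-gons with $n\ge 4$ is required because such $P$'s admit global combinatorial constraints on their boundary cycles that cannot be captured by the local per-face $\copyright$-conditions, producing compatible $\gamma$'s outside the image of $\rho$. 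For part (b), I would exhibit an explicit such $\gamma$. The simplicial facets through $v$ yield independent freedom in prescribing boundary data on each such facet (affine maps out of a simplex are determined by vertex values), while the failure of $P$ to be a pyramid with apex $v$ produces an incompatibility on the portion of $\partial P$ opposite to $v$: any global affine extension $g:P\to X$ must satisfy linear relations linking the near-$v$ and far-from-$v$ boundary data, which I would arrange to violate using the extra room granted by $f(Y)\subset\inte(X)$.
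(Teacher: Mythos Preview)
Your $\rho=g\mapsto g|_{\partial P}$ is exactly the paper's map, and your injectivity and naturality remarks are fine. Where you try to verify $\rho(\circledS(P,f))\subset\lim\limits_{\longleftarrow}\copyright(\tilde\FF(P),f)$ directly and flag an obstruction when $\dim X<\dim P$, the paper does not attempt this; it records instead the \emph{converse} implication---if some $\gamma$ in the limit extends to an affine $g:P\to X$, then necessarily $g\in\circledS(P,f)\cup\copyright(P,f)$---which is the direction actually consumed in the bijectivity argument of (a). So your concern, while legitimate, is tangential to how the paper organizes the proof.

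Your sketch of (a) has a real gap. Extracting vertex data $(\gamma(v))_v$ and ``forming the affine extension $g$'' presupposes that these values satisfy every affine relation among $\vertex(P)$; simplicity by itself does not give this (think of a cube). The paper's mechanism is different and is the substance of (a): fix one vertex $v$; simplicity at $v$ yields a unique affine $g:P\to\Aff(X)$ agreeing with $\gamma$ on all facets through $v$; then propagate facet by facet, using that whenever $g=\gamma$ on all but one facet through some vertex $w$, the $d{-}1$ already-matched ridges through $w$ affinely span the remaining facet, forcing $g=\gamma$ there too. This propagation is precisely what fails for $n$-gons with $n\ge4$, where a single matched vertex cannot pin down the adjacent edge. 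Your (b) is likewise too schematic: to land in the limit you must keep every $\gamma|_F$ inside $\copyright(F,f)$ while breaking global affineness, and ``arrange to violate linear relations'' does not visibly do both. The paper's construction is concrete: replace the star of $v$ in $\partial P$ by the cone from a point $w\in\Aff(P)\setminus P$ over $\text{link}(v)$, obtaining a simplicial sphere $\Pi$ that does not bound a convex body; the simplex-facet hypothesis makes the map $\partial P\to\Pi$ face-wise affine, and $f(Y)\subset\inte(X)$ permits an affine embedding $\conv(\Pi)\hookrightarrow X\setminus\inte(f(Y))$. The composite lies in the limit but, by non-convexity of $\Pi$, is not the boundary restriction of any affine $g:P\to X$.
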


Observe that $P$ has a vertex as in part (b) if $P$ is simplicial, not a simplex.

\begin{proof}
As in the proof of Lemma \ref{limit_exists}, we think of the elements of $\lim\limits_{\longleftarrow}\copyright(\tilde\FF(P),f)$ as the continuous face-wise affine maps $\gamma:\partial P\to X$, satisfying $\gamma|_F\in\copyright(F,f)$ whenever $F\in\tilde\FF(P)$.

The crucial observation, based on the condition $\dim(\Im f)=\dim X\le\dim P$, is the following implication: if a map $\gamma\in\lim\limits_{\longleftarrow}\copyright(\tilde\FF(P),f)$ extends to an affine map $g:P\to X$, then either $f(Y)\subset g(P)$ or $g(P)\cap\inte(f(Y))=\emptyset$. Therefore, the assignment $\rho:g\mapsto g|_{\partial P}$ is an injective affine map with the mentioned source and target. It is injective because two affine maps from $P$ coincide if they agree on $\partial P$.

\medskip\noindent(a) Let $P$ be simple and $\gamma:\partial P\to X$ be a face-wise affine map. We want to show that $\gamma$ extends to a map $g:P\to X$. Pick a vertex $v\in P$. Because $v$ is simple, there exists a unique affine map $g:P\to\Aff(X)$, which agrees with $\gamma$ on the facets $F\subset P$ with $v\in F$. We claim that $\gamma$ and $g$ agree on \emph{all} facets of $P$. Observe that if $\gamma$ and $g$ agree on all facets containing some vertex except possibly one, then, because $P$ is simple, the two maps also agree on the remaining facet. Now, the claim is proved by bringing in one by one the facets of $P$ for which the equality $g=\gamma$ has been verified, starting with the initial set of facets through $v$: in this process, until all facets have been incorporated, there is always a vertex, incident with exactly one new facet.

\medskip\noindent(b) Let $v\in P$ be a vertex with the mentioned property. There is a point $w\in\Aff(P)\setminus P$ such that the faces of $P$, not containing $v$, and the simplices 
$$
\conv(w,\Delta),\quad \Delta\in\text{link}(v),
$$ 
form a simplicial sphere $\Pi$, which does not bound a convex body in $\Aff(P)$. Because the faces of $P$ through $v$ are simplices, there is a map $\gamma':\partial P\to\Pi$, which restricts to (i) affine isomorphisms on the faces, containing $v$, and (ii) the identity maps on the faces, not containing $v$. Since $f(Y)\subset\inte(X)$, there is an injective affine map $\gamma'':\conv(\Pi)\to X\setminus\inte(f(Y))$.  Then $\gamma''\gamma'\in\lim\limits_{\longleftarrow}\copyright(\tilde\FF(P),f)\setminus\Im\rho$.
\end{proof}

\section{New challenges}\label{Challenges}

Here we raise two natural problems on polytopes, motivated by  sandwiching and complementing. One is of classical flavor and the other is more of a research program.

The class of polytopes, for which the map $\rho$ in Theorem \ref{comp-quotient} is bijective, is considerably larger than the class, mentioned in Theorem  \ref{comp-quotient}(a). For instance, if the nonsimple vertices of $P$ are rare compared to  the simple ones, than the same bijectivity argument applies. Explicit examples are provided by the \emph{anti-bipyramids} -- they have two antipodal vertices, separated by the zig-zagging equators through the other vertices, which are all simple. Interestingly, the anti-bipyramids with $4n+2$ facets can be realized as the hom-polytopes $\Hom(P_{2n+1},[0,1])$, where $P_k$ is the regular $k$-gon. 

Call a polytope $P$ \emph{affine-rigid} if every continuous face-wise affine map from $\partial P$ to a vector space extends to an affine map from $P$. The isometric version of this concept is the much studied \emph{rigidity property} in metric polytope theory. The story goes back to the \emph{Cauchy Theorem,} showing that all 3-polytopes are rigid. This was generalized to all higher dimensions by Alexandrov in the mid-20th century \cite{Rigidity}. As the proof of Theorem \ref{comp-quotient} shows, the map $\rho$ is bijective if and only if $P$ is affine-rigid.

\begin{problem}\label{Classical}
Classify the affine-rigid polytopes.
\end{problem}

The functorial approach to polytopes explains how the (bi)functors 
$$
\Hom,\ \otimes,\ \ker^*,\ \coker^*
$$
are \emph{internal} for $\POL$. However, Propositions \ref{about-sanwishing}, \ref{about-complementing2}, and Remark  \ref{nonlinearity}, show that, for the functors $\circledS$ and $\copyright$, one needs to invoke more general semialgebraic sets.

\begin{problem}\label{Motivic}
Is there a reasonably small, finitely complete and co-complete, self-enriched symmetric monoidal extension of $\POL$, which makes the functors $\circledS$ and $\copyright$ internal? Does the category of compact subanalytic sets and affine maps between them have all these properties?
\end{problem}

The category of all compact semialgebraic sets and affine maps does not seem to be the right choice: the conical example in \cite[Theorem 3.15]{Velasco} hints at the existence of compact convex semialgebaic sets $X$ and $Y$, for which $\Hom(X,Y)$ is not semialgebraic.

Notice that, if a category $\mathcal M$ as in Problem \ref{Motivic} exists, then the functor $\copyright(-,f)$ is representable whenever the closure $\overline{\Im f}$ is a full-dimensional topological manifold with boundary: $\copyright(-,f)=\Hom_{\mathcal M}(-,X\setminus\inte(\Im f))$.

\bibliographystyle{plain}
\bibliography{bibliography}

\end{document}